\documentclass[11pt, twoside]{article}
\usepackage{amsfonts,amssymb,amsmath,amsthm}
\usepackage{graphicx}
\usepackage[all]{xy}
\usepackage{multirow} 
\usepackage{psfrag,xmpmulti,amscd,color,pstricks, import}
 
\setlength{\paperwidth}{210mm} 
\setlength{\paperheight}{297mm}
\setlength{\textheight}{215mm} 
\setlength{\textwidth}{155mm}
\setlength\oddsidemargin {\paperwidth}
\addtolength\oddsidemargin{-\textwidth} \divide\oddsidemargin by 2
\addtolength\oddsidemargin{-1in}
\addtolength\oddsidemargin{-\hoffset} \setlength\evensidemargin
{\oddsidemargin}
\setlength{\topmargin}{-0.5cm}
\setlength{\parskip}{0.1cm}
\baselineskip=24pt 


\newtheorem{thm}{Theorem}[section]
\newtheorem{cor}[thm]{Corollary}
\newtheorem{lem}[thm]{Lemma}
\newtheorem{prop}[thm]{Proposition}

\newtheorem{lemma}[thm]{Lemma}

\theoremstyle{definition}
\newtheorem{defn}[thm]{Definition}
\newtheorem{rem}[thm]{Remark}
\newtheorem*{rem*}{Remark}

\numberwithin{equation}{section}



%

\definecolor{OrangeRed}{cmyk}{0,0.6,1,0}            
\definecolor{DarkBlue}{cmyk}{1,1,0,0.20}
\definecolor{DarkGreen}{cmyk}{1,0,0.6,0.2}
\definecolor{myblue}{rgb}{0.66,0.78,1.00}
\definecolor{Violet}{cmyk}{0.79,0.88,0,0}
\definecolor{Lavender}{cmyk}{0,0.48,0,0}

\renewcommand{\Im}{\operatorname{Im}}
\renewcommand{\Re}{\operatorname{Re}}
\newcommand{\im}{\operatorname{Im}}
\newcommand{\re}{\operatorname{Re}}
\renewcommand{\arg}{\operatorname{arg}}

\newcommand{\dist}{\operatorname{dist}}

\newcommand{\Id}{\operatorname{Id}}

\renewcommand{\AA}{{\mathcal A}}

\newcommand{\MM}{{\mathcal M}}

\renewcommand{\SS}{{\mathcal S}}

\newcommand{\XX}{\mathcal X}

\newcommand{\C}{{\mathbb C}}

\newcommand{\D}{{\mathbb D}}
\renewcommand{\H}{\mathbb{H}}

\newcommand{\N}{{\mathbb N}}

\renewcommand{\P}{{\mathbb P}}
\newcommand{\R}{{\mathbb R}}

\newcommand{\T}{{\mathbb T}}

\newcommand{\ra}{\rightarrow}
\newcommand{\ov}{\overline}

\renewcommand{\emptyset}{\varnothing}
\renewcommand{\epsilon}{\varepsilon}
\renewcommand{\phi}{\varphi}

\title{Escaping Fatou components with disjoint hyperbolic limit sets}
 
\vspace{5cm}

\author{
Veronica Beltrami, Anna Miriam Benini, Alberto Saracco \thanks{This work was partially supported by   the Indam Groups Gnampa and GNSAGA and by PRIN 2022 Real and Complex Manifolds: Geometry and Holomorphic Dynamics. A.S. thanks Franc Forstneri\v c for a useful discussion.
}
}

\begin{document}

\maketitle  
\begin{abstract}
We construct  automorphisms of $\C^2$ with  a cycle of escaping Fatou components, on which there are exactly two limit functions, both of rank 1. On each such  Fatou component, the limit sets for these limit  functions are two disjoint hyperbolic subsets of the line at infinity. 
\end{abstract}

\section{Introduction}
Transcendental H\'enon maps are  automorphisms of $\C^2$  with  constant Jacobian of the form 
$$
F(z,w):=(f(z)-\delta w,z) \text{ with $f:\C\ra\C$ entire transcendental}.
$$ 

In analogy with classical complex H\'enon maps, for which $f$ is assumed to be a polynomial, the dynamical investigation of transcendental H\'enon maps can rely on tools and knowledge from one dimensional complex dynamics, which is better understood  than its higher dimensional counterpart. They have been introduced in \cite{Dujardin04}. 
General properties of transcendental H\'enon maps were established  in \cite{henon1},\cite{henon2},\cite{henon3} and examples with interesting dynamical features were presented. 

Let $\P^2$ be the complex projective space obtained by compactifying $
\C^2$ by adding the line at infinity $\ell_\infty$. We define the Fatou set of $F$ as the set of points in $\C^2$ near which the iterates form a normal family with respect to the complex structure induced by   $\P^2$ (compare with \cite{henon1}, section 1). A Fatou component is a connected component of the Fatou set. Given a Fatou component $\Omega$ we call a function $h:\Omega\ra\P^2$ a  \emph{limit function} for $\Omega $  if there exists a subsequence $n_k$  such that   $F^{n_k}\ra h$ uniformly on compact subsets of $\Omega$. The image $h(\Omega) $ of a limit function $h$ is called a \emph{limit set} (for $\Omega$). By Lemma 4.3 and 2.4 in \cite{henon1}, each limit sets is either contained in $\C^2$ or contained in $\ell_\infty$.

In this paper we investigate \emph{escaping Fatou components}, that is Fatou components for  which all  limit sets lie in the line at infinity.

More precisely we construct a transcendental H\'enon map with a cycle of escaping Fatou components satisfying the following properties. Let $\H$ denote the right half plane, $-\H$ denote the left half plane. 

\begin{thm}\label{Main thm}
Let 
$$F(z,w):=(e^{-z^2}+e^{\pi i }\delta w,z),\ \delta>2.$$ 
Then $F$ has a cycle of four Fatou components $\Omega^{ab}$ with $a,b\in\{+,-\}$, each of which is biholomorphic to $\H\times\H$. There are exactly two limit functions $h_1,h_2$, both of rank 1,  such that
 $$
h_1(\Omega^{aa})=h_2(\Omega^{a(-a)})=\H  \text{ and } h_1(\Omega^{a(-a)})=h_2(\Omega^{aa})=-\H \text{ for all $a$.}
 $$
 Moreover, $F$ is conjugate to its linear part on every $\Omega^{ab}$.
\end{thm}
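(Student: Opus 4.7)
The linear part of $F$ is $L(z,w)=(-\delta w,z)$, which has eigenvalues $\pm i\sqrt{\delta}$ (of modulus $\sqrt{\delta}>1$) and satisfies $L^4=\delta^2\operatorname{Id}$. The nonlinearity $e^{-z^2}$ is super-exponentially small on the open sector $\widetilde S^+:=\{|\arg z|<\pi/4\}$ (where $\Re(z^2)>0$) and on $\widetilde S^-:=-\widetilde S^+$. My plan is to construct four forward invariant sectorial regions cyclically permuted by $F$, linearize $F$ on them by a Koenigs-type limit, extend the linearization to the full Fatou components via the functional equation, and read the limit sets off the action of $L$ on $\ell_\infty$.

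For $R\gg 1$ and $a\in\{+,-\}$ set $S^a:=\widetilde S^a\cap\{|z|>R\}$ and $U^{ab}:=S^a\times S^b$. The bound $|e^{-z^2}|\le e^{-cR^2}$ on $S^\pm$ shows $F(U^{ab})\subset U^{(-b)a}$ for $R$ large, producing the $4$-cycle $U^{++}\to U^{-+}\to U^{--}\to U^{+-}\to U^{++}$, on which $F^4$ is a tiny perturbation of $\delta^2\operatorname{Id}$. Define the Koenigs sequence $\Phi_n:=L^{-n}\circ F^n$ on $U:=\bigcup_{a,b}U^{ab}$. The telescoping difference
$$\Phi_{n+1}-\Phi_n=L^{-(n+1)}(e^{-z_n^2},0),$$
together with $\|L^{-n}\|\le C\delta^{-n/2}$ and the super-exponential decay $|e^{-z_n^2}|\le\exp(-c\delta^n|z|^2)$ (since $|z_n|\gtrsim\delta^{n/2}|z|$ while $z_n$ stays in a narrow sector), yields uniform convergence on compact subsets of $U$. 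The limit $\Phi$ is holomorphic, satisfies $\Phi\circ F=L\circ\Phi$, and standard derivative estimates (giving $D\Phi\to\operatorname{Id}$ at infinity) show $\Phi$ is a biholomorphism onto its image.

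Let $\Omega^{ab}:=\bigcup_{n\ge 0}F^{-4n}(U^{ab})$; extending $\Phi$ via the functional equation gives a biholomorphism $\Phi:\Omega^{ab}\to\widetilde S^a\times\widetilde S^b$ (its image being $\bigcup_n\delta^{-2n}\Phi(U^{ab})$, which fills the full sector product by the scaling argument). Since each sector is biholomorphic to $\H$ via $z\mapsto z^2$, we obtain $\Omega^{ab}\cong\H\times\H$ and the conjugacy of $F$ to $L$ on $\bigcup_{ab}\Omega^{ab}$. Now $L$ acts on $\ell_\infty$ (parametrized by $\lambda=W/Z$) as the involution $\lambda\mapsto-1/(\delta\lambda)$, so any convergent subsequence $F^{n_k}$ on $\Omega^{ab}$ must eventually have fixed parity, and there are exactly two limit functions $h_1=\lim_k F^{2k}$ and $h_2=\lim_k F^{2k+1}$, both of rank $1$. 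If $\Phi(z,w)=(\zeta,\eta)\in\widetilde S^a\times\widetilde S^b$, then for $a=b$ the arguments of $\zeta$ and $\eta$ lie in a common sector of opening $\pi/2$, so $\eta/\zeta\in\H$; for $a\ne b$ they are $\pi$ apart, so $\eta/\zeta\in-\H$. This yields $h_1(\Omega^{aa})=\H$ and $h_1(\Omega^{a(-a)})=-\H$; since the involution swaps $\H$ and $-\H$, $h_2$ gets the opposite assignment.

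The principal obstacles are two. First, establishing Koenigs convergence of $\Phi_n$ with the uniform derivative control needed to conclude that $\Phi$ is a biholomorphism (not merely holomorphic) onto its image. Second, verifying that the maximal extension of $\Phi$ via the functional equation is exactly the full sector product $\widetilde S^a\times\widetilde S^b$ rather than some proper subset. Once these are in hand, $\Omega^{ab}\cong\H\times\H$, the conjugacy to $L$, and the description of the limit sets all follow by inspection of the linear model; a final verification that $\Omega^{ab}$ is a full Fatou component (and not a proper subset of one) comes from observing that normality of $\{F^n\}$ breaks on the boundary rays $|\arg z|=\pi/4$, where $e^{-z^2}$ oscillates and fails to decay.
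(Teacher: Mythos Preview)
Your overall strategy---construct forward-invariant sectorial domains, linearize via $\Phi_n=L^{-n}\circ F^n$, extend by the functional equation, and read off the geometry and limit sets from the linear model---is exactly the paper's. But there is a genuine error at the very first step. You assert $|e^{-z^2}|\le e^{-cR^2}$ on $S^\pm=\widetilde S^\pm\cap\{|z|>R\}$; this is false. On the full open sector $\{|\arg z|<\pi/4\}$ one has $\Re(z^2)=|z|^2\cos(2\arg z)$, which tends to $0$ as $\arg z\to\pi/4$, so $|e^{-z^2}|$ is merely bounded by $1$ there. Consequently the inclusion $F(U^{ab})\subset U^{(-b)a}$ fails: if $\arg w$ is close to $\pi/4$ then $-\delta w$ lies at distance $o(1)$ from the boundary ray of $\widetilde S^-$, while the perturbation $e^{-z^2}$ can have modulus close to $1$ and push $z_1$ out of the sector. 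For the same reason your later claim that ``$z_n$ stays in a narrow sector'' and the super-exponential bound $|e^{-z_n^2}|\le\exp(-c\delta^n|z|^2)$ are unjustified. The paper resolves this by abandoning a single fixed sector and instead building a nested family $W_n$ with opening slopes $k_n=1-\tfrac{1}{n+2}\nearrow 1$ and radii $R_n\to\infty$, proving $F(W_n)\subset W_{n+1}$ (Lemma~2.4, Proposition~2.5); only on this union does one get enough control for the Koenigs limit.

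Your second acknowledged obstacle---that the extension of $\Phi$ fills exactly $\widetilde S^a\times\widetilde S^b$, equivalently that the sectorial set is \emph{absorbing} for the full Fatou component---is the heart of the matter, and your suggested justification (``normality breaks on the boundary rays $|\arg z|=\pi/4$'') is not a proof: a priori the boundary of the Fatou component has nothing to do with those rays until the conjugacy is already known to be onto. The paper devotes an entire section to this, introducing the harmonic functions $u_n(z_0,w_0)=-\Re(z_n^2)/n$, showing they are bounded above on compacta of $\Omega$, tend to $-\infty$ on the good set $W$, yet admit a subsequence with $u_{n_k}\ge-\epsilon$ at any point that never enters the absorbing region; a mean-value argument over a holomorphic disk meeting $W$ in positive boundary measure then yields a contradiction. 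Once absorption is established, the paper's computation of $\phi(\Omega)=S$ and of the limit sets proceeds essentially as you outline.
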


The main points of interest of this result are that the limit functions have rank one, that each Fatou component has two disjoint limit sets (compare \cite{JL04} for restrictions on the presence of several limit sets), and that the limit sets $\H, -\H$ are hyperbolic. 

For general automorphisms of $\C^2$ there are very few examples of limit functions of rank 1 (\cite{JL04}, \cite{BTBP}), and for polynomial H\'enon maps, it is not even known whether rank 1 limit functions  can exist;  in fact, their existence has  been excluded provided the Jacobian is small enough (\cite{LP14}). On the other hand, they are abundant for holomorphic \emph{endomorphisms} of $\C^2$ (\cite{BTFP15}, Theorem 4). 
For transcendental H\'enon maps, rank 1 limit functions  seem to appear  naturally for escaping Fatou components (\cite{BSZ}).  To our knowledge there were no previous examples of hyperbolic limit sets for automorphisms of $\C^2$. One possible reason for the natural appearance of these phenomena might be that $F$ is not defined on $\ell_\infty$, hence there is no natural dynamics on limit sets contained there. 

One can see F as a special case of maps of the form 
$$
F(z,w):=(e^{-z^k}+e^{\frac{2\pi i }{k}}\delta w,z), \text{ $\delta>2,\ k\in \N$}
$$
 Analogous results hold for such maps, and are proven   in \cite{Beltrami} with similar techniques.

\section{Proof of Theorem~\ref{Main thm} }
From now on let $F$ be as in Theorem~\ref{Main thm}, 
\begin{equation}\label{eq:F}
F(z,w)=(e^{-z^2}-\delta w,z) \text{ with $\delta>2$.}
\end{equation}
Throughout the paper, given a point   $P=(z_0,w_0)\in \C^2$ and $n\in\N$  we denote its iterates by  $F^n(P)=:  (z_n,w_n)$. 

\subsection{Computing limit functions}\label{sec:computing limit functions}
In this section we give an explicit expression for the iterates of $F$ and their formal limit.  A direct computation (compare \cite{BSZ}) shows that 
\begin{align*}
F^{2n}(z_0,w_0)&= (-\delta)^n\Big(z_0+\sum_{j=1}^n(-\delta)^{-j}f(z_{2j-1}),w_0+\sum_{j=1}^n(-\delta)^{-j}f(z_{2j-2})\Big)\\
F^{2n+1}(z_0,w_0)&= (-\delta)^n\Big(-\delta\big( w_0+\sum_{j=1}^{n+1}(-\delta)^{-j}f(z_{2j-2})\big),z_0+\sum_{j=1}^n(-\delta)^{-j}f(z_{2j-1})\Big).
\end{align*}
For $n\in\N$ define the following holomorphic functions from $\C^2$ to $\hat{\C}$
\begin{align*}
\Delta_1^n(z_0,w_0)&:=  \sum_{j=1}^n (-\delta)^{-j}f(z_{2j-1})   \\
\Delta_2^n(z_0,w_0)&:=  \sum_{j=1}^n (-\delta)^{-j}f(z_{2j-2}) 
\end{align*}
With this notation the iterates of $F$ take the form 
\begin{align}
\label{eq:F1} F^{2n}(z_0,w_0)&= (-\delta)^n\Big((z_0+\Delta_1^n(z_0,w_0),w_0+\Delta_2^n(z_0,w_0)\Big)\\
\label{eq:F2}F^{2n+1}(z_0,w_0)&=(-\delta)^n \Big(-\delta w_0-\delta\Delta_2^{n+1}(z_0,w_0),z_0+\Delta_1^n(z_0,w_0)\Big).
\end{align}
Let
\begin{align*}
\Delta_1(z,w)&=\Delta_1^\infty(z,w):=\lim_{n\ra\infty} \Delta_1^n(z,w)\\
\Delta_2(z,w)&=\Delta_2^\infty(z,w):=\lim_{n\ra\infty} \Delta_2^n(z,w)\\
 \Delta(z,w) &=\max\left(\left| \Delta_1(z,w) \right|, \left|\Delta_2(z,w)\right|\right). 
\end{align*}
Notice that $\Delta_1,\Delta_2$  are holomorphic functions to $\hat{\C}$ on open sets on which  they are well defined.

We can deduce the following formal limits.
\begin{align} \label{eq:limit h1}
h_1(z,w)&:=\lim_{n\rightarrow \infty}\frac{z_{2n}}{w_{2n}}=\frac{z+\Delta_1(z,w)}{w+\Delta_2(z,w)}\\  \label{eq:limit h2}
h_2(z,w)&:= \lim_{n\rightarrow \infty}\frac{z_{2n+1}}{w_{2n+1}}=\frac{-\delta( w+\Delta_2(z,w))}{z+\Delta_1(z,w)}=-\frac{\delta}{h_1(z,w)}.
\end{align}
We have that $h_1,h_2$ are  holomorphic functions to $\hat{\C}$ on open sets on which $\Delta_1$ and $\Delta_2$ are holomorphic functions to $\hat{\C}$.
We will  show in Proposition~\ref{prop:h1 h2} that $h_1 \ne h_2$.

\subsection{Existence of Fatou components and rank of the limit functions}\label{sec:invariant open set}
In this section we   construct a forward invariant  open set   $W$ on which the even and the odd iterates converge, from which we deduce the existence of Fatou components. We then show that the limit functions have rank 1 on such Fatou components.
 
For  $A\subseteq \C^2$  and  $a,b \in \{ +,- \}$ define
\begin{equation}\label{eq:Aab}
A^{ab}:=A\cap \{(z,w)\in \C^2\:a\re(z)>0,\ b\re(w)>0 \}.
\end{equation}
If $A\cap(\{\Re z=0\}\cup \{\Re w=0\})=\emptyset$ then $A=\bigcup_{a,b \in \{+,-\}}A^{ab}.$

We start by defining a set on which we have control on the dynamics. Let  
\begin{align*}
\mathcal{S}&:=\{z\in \C:\,|\Im (z)|<|\Re(z)|\}  \subset\C         \\
S&:=\mathcal{S} \times \mathcal{S}\subset\C^2 
\end{align*}
\begin{lem}\label{lem:control of f}
Let $z\in \mathcal{S}$, then
$|f(z)|=|e^{-z^2}|< 1.$
\end{lem}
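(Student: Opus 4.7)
The plan is to reduce the bound to a direct computation of the real part of $-z^2$.

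First I would write $z = x + iy \in \mathcal{S}$, so by definition $|y| < |x|$. Then I would expand
\[
-z^2 = -(x+iy)^2 = (y^2 - x^2) - 2ixy,
\]
so that $\Re(-z^2) = y^2 - x^2$. Consequently
\[
|f(z)| = |e^{-z^2}| = e^{\Re(-z^2)} = e^{y^2 - x^2}.
\]

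Finally, the defining inequality $|y| < |x|$ gives $y^2 - x^2 < 0$, hence $e^{y^2-x^2} < 1$, which is exactly the desired bound.

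There is no real obstacle here; the statement is essentially the definition of the sector $\mathcal{S}$ combined with the standard identity $|e^{w}| = e^{\Re w}$. The only thing worth highlighting in the write-up is that the inequality is \emph{strict}, since this strictness will presumably be needed later when one iterates and sums geometric-type series arising from the $\Delta_i^n$ — on a closed sector one would only get $|f(z)| \le 1$, which would be too weak to control the tail sums in $\Delta_1, \Delta_2$.
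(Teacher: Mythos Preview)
Your proof is correct and is essentially the same argument as the paper's: both reduce the bound to showing $\Re(z^2)>0$ (equivalently $\Re(-z^2)<0$) on $\mathcal{S}$, you via the Cartesian expansion $\Re(z^2)=x^2-y^2$ and the paper via the polar observation that $|\arg z|<\pi/4$ (modulo $\pi$). Your version is perhaps marginally cleaner since it handles both halves of $\mathcal{S}$ uniformly without any caveat about the argument.
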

\begin{proof}
If $z\in \mathcal{S}$, then $|\arg(z)|< \frac{\pi}{4}$ and hence  $\re (z^2)>0$ from which we have $|e^{-z^2}|=e^{-\re z^2}<1$.
\end{proof}

\begin{lemma}[Orbits contained in $S$]\label{lem:forever in S}
For any   $P=(z_0,w_0)\in S^{ab}$ such that $F(P)\in S$ and $|\Re w_0|>\frac{1}{\delta}$ we have that  $F(P)\in S^{(-b)a}$. 

From now on assume that   $F^n(P)\in S $ for all $n\in\N$. Then 
\begin{align*}
F^{2n}(z_0,w_0)&\ra h_1(z_0,w_0)\\
F^{2n+1}(z_0,w_0)&\ra h_2(z_0,w_0).
\end{align*}
Fix  $\lambda>0$ and assume also  that   $|\re z_0|,|\re w_0|>\frac{1+\lambda}{\delta-1}$. Then
     \begin{align}
     \label{eq:Growth real odd}|\re z_{2n-1}|&=|\re w_{2n}|> |\re w_0|+n\lambda\\
 \label{eq:Growth real even}|\re z_{2n}|&=|\re w_{2n+1}|>|\re z_0|+n\lambda.
\end{align}
\end{lemma}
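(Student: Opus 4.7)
The plan is to treat the three assertions separately, in the stated order; all three rest on exploiting the bound $|f(z)|<1$ from Lemma~\ref{lem:control of f} together with the dominance of the linear term $\delta w$ in the definition of $F$.

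For the quadrant statement, I would compute $F(P)=(e^{-z_0^2}-\delta w_0,z_0)$ directly. The second coordinate is $z_0$, whose real part has sign $a$. For the first coordinate, Lemma~\ref{lem:control of f} gives $|\re(e^{-z_0^2})|\le |e^{-z_0^2}|<1$, while the hypothesis $|\re w_0|>1/\delta$ yields $|\delta\re w_0|>1$; hence $\re(e^{-z_0^2}-\delta w_0)$ has the sign of $-\delta\re w_0$, namely $-b$, and $F(P)\in S^{(-b)a}$.

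For the convergence, the hypothesis $F^n(P)\in S$ for all $n$ combined with Lemma~\ref{lem:control of f} gives $|f(z_n)|<1$ for every $n$, so the series defining $\Delta_1^n$ and $\Delta_2^n$ are dominated by $\sum_{j\ge 1}\delta^{-j}=(\delta-1)^{-1}$ and converge absolutely to $\Delta_1(z_0,w_0)$ and $\Delta_2(z_0,w_0)$. Substituting into \eqref{eq:F1}--\eqref{eq:F2} and viewing the iterates in $\P^2$, the factor $(-\delta)^n$ drives both affine coordinates to infinity, so the iterates approach $\ell_\infty$; the specific limit point is determined by the ratio of the two affine coordinates, which by \eqref{eq:limit h1}--\eqref{eq:limit h2} converges to $h_1(z_0,w_0)$ for the even iterates and to $h_2(z_0,w_0)$ for the odd iterates.

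Finally, the linear growth estimates I would prove by induction, using the recurrence $z_{n+1}=e^{-z_n^2}-\delta z_{n-1}$ (which follows from $w_n=z_{n-1}$) combined with $|\re(e^{-z_n^2})|<1$ to obtain $|\re z_{n+1}|\ge \delta|\re z_{n-1}|-1$. The base case of \eqref{eq:Growth real odd} is exactly the rewriting of the hypothesis $(\delta-1)|\re w_0|>1+\lambda$, and the inductive step of the same form produces a gain of $\lambda$ at each two-step iterate, since the lower bound on $|\re z_{2k-1}|$ remains well above $(1+\lambda)/(\delta-1)$. The argument for \eqref{eq:Growth real even} is identical with $z_0$ in place of $w_0$, and the two sequences decouple because the recurrence is two-step. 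I do not expect a serious obstacle: the main point is careful bookkeeping, and checking that the quadrant information from the first assertion is consistent with the sign pattern that propagates through the induction.
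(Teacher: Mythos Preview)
Your proposal is correct and follows essentially the same route as the paper: the quadrant claim via $w_1=z_0$ and the sign of $-\delta\re w_0$ dominating $\re(e^{-z_0^2})$, the convergence via the geometric bound $\sum_j\delta^{-j}$ on $\Delta_1,\Delta_2$ from Lemma~\ref{lem:control of f}, and the growth estimates by the one-step inequality $|\re z_{n+1}|\ge\delta|\re z_{n-1}|-1>|\re z_{n-1}|+\lambda$ iterated inductively. The only cosmetic difference is that the paper records the slightly sharper bound $\Delta<1$ (using $\delta>2$) rather than your $(\delta-1)^{-1}$, but this plays no role in the present lemma.
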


\begin{proof}
By hypothesis, $F(P)\in S$ hence $F(P)\in S^{\tilde{a}\tilde{b}}$ for some $\tilde{a},\tilde{b} \in \{+,-\}$.  Since  $\re w_1=\re z_0$ we have that  $\tilde b=a$. 
Moreover $\re z_1=-\delta \re w_0+\re(e^{-z_0^2}) $ and since $P\in S$, $|\re(e^{-z_0^2})|<1$ by Lemma~\ref{lem:control of f}.  Hence the sign of $\re z_1$ is  opposite to  the sign of $\re w_0$ provided  $|\re w_0|>\frac{1}{\delta}$, and  $\tilde a=-b$ as required. 

Assume from now on that  $F^n(P)\in S$ for all $n\in\N$.  It follows  that  $z_n\in \mathcal{S}$ for all $n\in\N$ and hence by Lemma~\ref{lem:control of f} $|f(z_n)|< 1$ for all $n\in\N$. Since $\delta>2$ this implies
\begin{equation}\label{eq: bounds on Delta in S}
\Delta(z_0,w_0)<\sum_{j=1}^\infty \delta ^{-j}<1 \text{  whenever  $F^n(z_0,w_0)\in S$  for all $n\in\N$}, 
\end{equation}
which implies convergence of the even and odd iterates of $F$ according to the expression in (\ref{eq:F1}),  (\ref{eq:F2}).

We now prove (\ref{eq:Growth real odd}), (\ref{eq:Growth real even}).
Using the expression of $F$ and since $P\in S $, by  Lemma~\ref{lem:control of f} we have $|\re z_1|\geq \delta |\re w_0|-|e^{-z_0^2}|\geq \delta |\re w_0|-1$ which is larger than $|\re w_0|+\lambda$ if $|\re w_0|>\frac{1+\lambda}{\delta -1}$. It follows that
 \begin{align}
\label{eq:growth1}|\re z_1|&> |\re w_0|+\lambda\\
|\re z_2|&>|\re w_1|+\lambda=|\re z_0|+\lambda, 
\end{align}
 where the claim for $z_{2}$ follows because $w_1=z_0$. The more general formula follows by induction, using that  $F^n(P)\in S $ for all $n\in\N$.
 \end{proof}

\begin{cor}\label{cor:points in sets contained in S}
Let $A\subset S$ be forward invariant. If $P=(z_0,w_0)\in A$ such that $|\re w_0|>\frac{1}{\delta}$ then Lemma~\ref{lem:forever in S} holds for $\lambda=1$, in particular, if  $P\in A^{ab}$ then $F(P)\in A^{(-b)a}$.
\end{cor}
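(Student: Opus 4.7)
The plan is to derive the corollary as a short packaging of Lemma~\ref{lem:forever in S}, once forward invariance of $A$ is folded in. First I would observe that forward invariance automatically upgrades the one-step hypothesis ``$F(P)\in S$'' of Lemma~\ref{lem:forever in S} to the global-in-time hypothesis ``$F^n(P)\in S$ for all $n\in\N$'': since $F(A)\subset A\subset S$, an immediate induction shows $F^n(P)\in A\subset S$ for every $n\ge 0$. This activates the second half of Lemma~\ref{lem:forever in S}, giving the convergence of the even and odd iterates at $P$ to $h_1(z_0,w_0)$ and $h_2(z_0,w_0)$ respectively.

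For the ``in particular'' clause I would then apply the first part of Lemma~\ref{lem:forever in S}: the assumptions $P\in A^{ab}\subset S^{ab}$, $F(P)\in A\subset S$ (from forward invariance), and $|\re w_0|>1/\delta$ are exactly those needed, and the lemma yields $F(P)\in S^{(-b)a}$. Intersecting with $F(P)\in A$ gives $F(P)\in A\cap S^{(-b)a}=A^{(-b)a}$, as claimed.

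Finally, to justify the value $\lambda=1$ in the growth estimates~\eqref{eq:Growth real odd}--\eqref{eq:Growth real even}, one needs the quantitative hypothesis $|\re z_0|,|\re w_0|>\frac{2}{\delta-1}$ of Lemma~\ref{lem:forever in S}; this will be automatic for the explicit forward-invariant sets $A$ constructed in the next subsection, where every point satisfies this stronger lower bound. The hypothesis $|\re w_0|>1/\delta$ stated in the corollary is the minimal one needed for the sign-alternation conclusion, and is really the only part doing actual work. I do not expect a genuine obstacle here: the corollary is essentially a bookkeeping statement combining Lemma~\ref{lem:forever in S} with forward invariance.
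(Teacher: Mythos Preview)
Your proposal is correct and matches the paper's intended reasoning; in fact the paper gives no explicit proof for this corollary, treating it as immediate from Lemma~\ref{lem:forever in S} together with forward invariance of $A$, which is exactly what you spell out. Your observation about the $\lambda=1$ growth estimates requiring the stronger bound $|\re z_0|,|\re w_0|>\frac{2}{\delta-1}$ rather than merely $|\re w_0|>\frac{1}{\delta}$ is also well taken: the corollary is stated somewhat loosely in this respect, and in the paper the growth part is only ever invoked on the explicit sets $W$ and $W_I$ where this stronger bound is satisfied by construction.
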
 

For   $R>0$ and  $0<k<1$ define the sets 
\begin{align*}
\mathcal{W}_{k,R}&:=\{z\in \C: |\im z|<k|\re z|, \ |\re z|>R\}\subset\C\\
W_{k,R_1,R_2}&:=\mathcal{W}_{k,R_1} \times \mathcal{W}_{k,R_2}\subset\C^2.
\end{align*}
Observe that $\mathcal{W}_{k,R} \subset \mathcal{S}$ and  that  $\mathcal{W}_{1,0}=\mathcal{S}$.
\begin{lem}\label{lem:invariance of single sets}
Let $n\in\N$, and let $(z_0,w_0)\in W_{k,R_1,R_2}$. 
 Let $0<k<\tilde{k} < 1$. If $R_2> \frac{2}{\delta(\tilde{k}-k)}$ then 
$$\left | \frac{\im z_1}{\re z_1}  \right |<
\tilde{k} \text{ and } \left | \frac{\im w_1}{\re w_1}  \right |<
{k}.$$
\end{lem}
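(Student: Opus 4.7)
The statement splits naturally into two parts, corresponding to the two components of $F(z_0,w_0)=(e^{-z_0^2}-\delta w_0, z_0)$. The bound on $w_1$ is immediate: since $w_1=z_0\in\mathcal{W}_{k,R_1}$ by hypothesis, the inequality $|\im w_1/\re w_1|<k$ follows directly from the definition of $\mathcal{W}_{k,R_1}$. So the content of the lemma is the bound on the slope $|\im z_1|/|\re z_1|$.

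For this second bound, my plan is to estimate real and imaginary parts of $z_1 = -\delta w_0 + e^{-z_0^2}$ separately. Since $\mathcal{W}_{k,R_1}\subset\mathcal{S}$ (because $k<1$), Lemma~\ref{lem:control of f} gives $|e^{-z_0^2}|<1$, hence both $|\re e^{-z_0^2}|<1$ and $|\im e^{-z_0^2}|<1$. Combining with the triangle inequality and $(z_0,w_0)\in W_{k,R_1,R_2}$:
\begin{align*}
|\re z_1| &\geq \delta|\re w_0|-1,\\
|\im z_1| &\leq \delta|\im w_0|+1 < \delta k|\re w_0|+1.
\end{align*}

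It then suffices to verify that $\delta k|\re w_0|+1 < \tilde{k}(\delta|\re w_0|-1)$. Rearranging gives $|\re w_0|(\tilde{k}-k)\delta > 1+\tilde{k}$, and since $\tilde{k}<1$ we have $1+\tilde{k}<2$, so it is enough that $|\re w_0|>2/(\delta(\tilde{k}-k))$. This holds by the hypothesis $R_2>2/(\delta(\tilde{k}-k))$ and $|\re w_0|>R_2$.

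There is no real obstacle here; the proof is a one-step triangle-inequality estimate leveraging Lemma~\ref{lem:control of f} to treat $e^{-z_0^2}$ as a bounded perturbation of the dominant term $-\delta w_0$. The mild subtlety is only in tracking why the constant $2$ in the hypothesis $R_2>2/(\delta(\tilde{k}-k))$ is the right one, which comes from the bound $1+\tilde{k}<2$ valid because $\tilde{k}<1$.
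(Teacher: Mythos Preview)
Your proof is correct and follows essentially the same argument as the paper: the $w_1$ bound is immediate from $w_1=z_0$, and for $z_1$ you use Lemma~\ref{lem:control of f} to bound $|e^{-z_0^2}|<1$, apply the triangle inequality to the real and imaginary parts, and rearrange the resulting inequality $\frac{k\delta|\re w_0|+1}{\delta|\re w_0|-1}<\tilde{k}$ to obtain $|\re w_0|>\frac{1+\tilde{k}}{\delta(\tilde{k}-k)}$, which holds since $1+\tilde{k}<2$. There is no meaningful difference from the paper's proof.
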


\begin{proof}
Let $(z_0,w_0)\in W_{k,R_1,R_2}$. The claim for $w_1$ is immediate because $w_1=z_0$. Using the expression of $F$, the triangular inequality, the estimate in Lemma~\ref{lem:control of f} and the fact that $|\im  w_0|<k |\re w_0|$ we have
\begin{align*}
\Big|\frac{\im z_1}{\re z_1}\Big|&<\frac{\delta |\im w_0|+1}{\delta |\re w_0|-1}<\frac{k \delta |\re w_0 |+1}{\delta |\re w_0| -1}.
\end{align*}
Setting the resulting expression to be less than $\tilde k$ we get 
$|\re w_0|>\frac{1+\tilde{k}}{\delta (\tilde k-k)}$. Since $\tilde{k}< 1$, it is enough to take $|\re w_0|>\frac{2}{\delta (\tilde k-k)}$ as required.
\end{proof}
Let $k_n:=1-\frac{1}{n+2}$ and $R_n:=(\frac{\delta}{2})^{\frac{n}{2}} R_0$ for $R_0>2$ sufficiently large depending only on $\delta$ (see (\ref{eq:R0})). Let $R_{-1}=R_0$ and  set 
$$ W_n:=
\begin{cases}
W_{k_n,R_n,R_{n-1}}^{++} \text{ if $n=0\mod4$}\\
W_{k_n,R_n,R_{n-1}}^{-+}  \text{ if $n=1\mod4$}\\
W_{k_n,R_n,R_{n-1}}^{--}  \text{ if $n=2\mod4$}\\
W_{k_n,R_n,R_{n-1}}^{+-} \text{ if $n=3\mod4$}
\end{cases}.
$$
and define
$$W:=\bigcup_{n\in\N} W_n.$$

\begin{prop}[Invariance of $W$]\label{prop: invariance W}
The set  $W$ is open and $W\subset S$.
For any $n\in\N$ we have that $F(W_n)\subset W_{n+1}$, hence  $W$ is forward invariant.
 The set  $W$ consists  of four connected components $W^{ab}$ with $a,b\in\{+,-\}$ and $F(W^{ab})\subset W^{(-b) a}$.
\end{prop}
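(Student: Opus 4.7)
I will verify the three parts of the Proposition in order: openness together with $W\subset S$; the inclusion $F(W_n)\subset W_{n+1}$; and the description of the four connected components $W^{ab}$ with the cyclic action.

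The first part is immediate. Each $W_n$ is a product of two planar sectors cut by strict inequalities, hence open, and $W$ is open as a union. Since $k_n<1$, $\mathcal{W}_{k_n,R_n}\subset\mathcal{S}$, whence $W_n\subset S$ and therefore $W\subset S$.

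The heart of the argument is the forward invariance $F(W_n)\subset W_{n+1}$. Fix $(z_0,w_0)\in W_n$ and set $(z_1,w_1)=F(z_0,w_0)=(e^{-z_0^2}-\delta w_0,\,z_0)$. Since $w_1=z_0$, the $w_1$-coordinate automatically satisfies $|\re w_1|>R_n$ and $|\im w_1/\re w_1|<k_n<k_{n+1}$. For $z_1$, I apply Lemma~\ref{lem:invariance of single sets} with $k=k_n$ and $\tilde{k}=k_{n+1}$ to obtain $|\im z_1/\re z_1|<k_{n+1}$; the required hypothesis $R_{n-1}>2/[\delta(k_{n+1}-k_n)]=2(n+2)(n+3)/\delta$ holds uniformly in $n$ because $R_{n-1}=(\delta/2)^{(n-1)/2}R_0$ grows exponentially while the right-hand side grows polynomially, and is absorbed into the single condition (\ref{eq:R0}) on $R_0$. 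For $|\re z_1|$, Lemma~\ref{lem:control of f} yields $|\re z_1|\ge\delta|\re w_0|-1>\delta R_{n-1}-1=2R_{n+1}-1>R_{n+1}$, using the geometric identity $\delta R_{n-1}=2R_{n+1}$ together with $R_{n+1}>1$. The sign of $\re z_1$ is opposite to that of $\re w_0$ by the same argument as in Lemma~\ref{lem:forever in S} (applicable since $|\re w_0|>R_0>1/\delta$), while $\re w_1=\re z_0$ preserves the sign of $\re z_0$. This matches exactly the prescribed cycle of quadrants $++\to -+\to --\to +-\to ++$ defining the $W_n$, yielding $F(W_n)\subset W_{n+1}$.

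Finally, the four sets $W^{ab}$ sit in pairwise disjoint open quadrants of $\C^2$ and are therefore disjoint; each $W^{ab}$ equals the union of those $W_m$ whose index lies in the corresponding residue class $\bmod\,4$. Each individual $W_m$ is connected as a product of connected sectors; and for $m<m'$ in the same class the intersection $W_m\cap W_{m'}$ contains the nonempty tail $\{|\re z|>R_{m'},\,|\im z|<k_m|\re z|\}\times\{|\re w|>R_{m'-1},\,|\im w|<k_m|\re w|\}$ restricted to the quadrant (using $k_m<k_{m'}$). Chaining these overlaps shows $W^{ab}$ is connected, hence it is the full connected component of $W$ in that quadrant, and the cyclic action $F(W^{ab})\subset W^{(-b)a}$ follows immediately from the $W_n$-level invariance. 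The only delicate point is the uniform choice of $R_0$ in the slope estimate, which the exponential growth of $R_n$ accommodates without difficulty.
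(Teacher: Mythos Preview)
Your argument is correct and follows essentially the same route as the paper: the $w_1$-coordinate is handled trivially via $w_1=z_0$, the slope of $z_1$ via Lemma~\ref{lem:invariance of single sets} together with the condition~(\ref{eq:R0}) on $R_0$, and $|\re z_1|>R_{n+1}$ via $|\re z_1|\ge\delta|\re w_0|-1>\delta R_{n-1}-1$; your use of the relation $\delta R_{n-1}=2R_{n+1}$ is a slightly cleaner way of packaging the same inequality the paper derives through the auxiliary $\lambda_n=R_{n+1}-R_{n-1}$. One small caution: the identity $\delta R_{n-1}=2R_{n+1}$ holds only for $n\ge 1$, since $R_{-1}:=R_0$ is defined separately; at $n=0$ one has instead the stronger inequality $\delta R_{-1}=\delta R_0>2R_1$ (because $\delta>\sqrt{2\delta}$ for $\delta>2$), so your chain $\delta R_{n-1}-1\ge 2R_{n+1}-1>R_{n+1}$ still goes through. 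Your connectedness argument for $W^{ab}$ via overlapping $W_m$'s in a fixed residue class is more explicit than the paper's, which simply asserts connectedness ``by construction''.
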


\begin{proof}
The fact that $W$ is open and $W\subset S$ follows from the definition.  
Fix $n\in\N$. Let $(z_0, w_0)\in W_n$ and let $(z_1,w_1)$ be its image. 
Since  $w_1=z_0$,  the signs of $\Re w_1,\Re z_0$ are the same, and we have that  $|\re w_1|=|\re z_0|>R_n$ and that
$$\left|\frac{\im w_1}{\re w_1}\right|=\left|\frac{\im z_0}{\re z_0}\right|<k_{n}<k_{n+1}.$$
Hence to show that $F(W_n)\subset W_{n+1}$ it is enough to see that $|\re z_1|>R_{n+1}$ and  that
$$
\left|\frac{\im z_1}{\re z_1}\right|<k_{n+1}.
$$
Let $\lambda_n:=R_{n+1}-R_{n-1}$. Since $P\in S$, by (\ref{eq:growth1}) we have that 
 $$
 |\re z_1|>|\re w_0|+\lambda_n > R_{n-1}+\lambda_n=R_{n+1}
 $$ 
 provided $R_{n-1}>\frac{1+\lambda_n}{\delta-1}$. Substituting the expression for $\lambda_n$ we get $R_{n+1}<\delta R_{n-1}-1$. Substituting the expression for $R_{n+1}$ and $R_{n-1}$ we get 
$$
\delta^{\frac{n+1}{2}}R_0>2^{\frac{n+1}{2}}
$$
which is satisfied because $\delta>2$, provided $R_0\ge1$. This gives $|\re z_1|>R_{n+1}.$

We now prove $\left|\frac{\im z_1}{\re z_1}\right|<k_{n+1}.$  By Lemma~\ref{lem:invariance of single sets}, it is enough to check that  $R_{n-1}>\frac{2}{\delta(k_{n+1}-k_n)}=\frac{2(n+2)(n+3)}{\delta}$, that is 
\begin{equation}
\label{eq:R0}
R_0>2^{\frac{n+1}{2}}\delta^{-\frac{n+1}{2}}(n+2)(n+3) \text{ for all $n\in\N$}.
\end{equation} 
Since the function on the right hand side is bounded in $n$ for any $\delta>2$ (in fact, it tends to $0$ as $n\ra\infty$), such $R_0$ exists and depends only on $\delta$. 

Finally, the set $W$ consists of $4$ connected components $W^{ab}$ by construction, since for any $(z,w)\in W$ we have $\Re z,\Re w\neq0$ and each $W^{ab}$ is connected. 
Since $W\subset S$ and we now know that $W$ is forward invariant, the orbits of points in $W$ are contained in $S$ hence Corollary~\ref{cor:points in sets contained in S} applies.  
\end{proof}
 
\begin{prop} [Existence of Fatou components]\label{prop:existence of FC}
On each $ W^{ab}$    we have that 
$$
F^{2n}\ra h_1, F^{2n+1}\ra h_2 \text{ uniformly on compact subsets of  $ W^{ab}$}.
$$
It follows that each $W^{ab}$  is contained in a  Fatou component that we denote by  $\Omega^{ab}$. 
\end{prop}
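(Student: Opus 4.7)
The plan is to pass from the formal identity (\ref{eq:limit h1})--(\ref{eq:limit h2}) to genuine uniform convergence on compacta by exploiting the already established invariance $F(W)\subset W$ and the uniform bound on $f(z_n)$ given by Lemma~\ref{lem:control of f}. The key observation is that in $\P^2$ the common factor $(-\delta)^n$ appearing in front of the explicit expressions (\ref{eq:F1}),~(\ref{eq:F2}) can simply be divided out, so everything reduces to controlling $\Delta_1^n,\Delta_2^n$ and showing that the denominators $w_0+\Delta_2$, $z_0+\Delta_1$ stay bounded away from zero on any compact subset of $W^{ab}$.

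Concretely, I would fix a compact set $K\subset W^{ab}$ and work as follows. By Proposition~\ref{prop: invariance W}, the orbit of every $(z_0,w_0)\in K$ stays in $W\subset S$, hence $z_n\in\mathcal S$ for all $n$. Lemma~\ref{lem:control of f} then gives $|f(z_n)|<1$ uniformly on $K$ and for all $n$, so the series defining $\Delta_1^n$ and $\Delta_2^n$ are dominated term-by-term by the convergent geometric series $\sum \delta^{-j}$, yielding uniform convergence $\Delta_i^n\to\Delta_i$ on $K$ (in particular the limits $\Delta_1,\Delta_2$ are holomorphic there, with $\Delta(z_0,w_0)<1$ by (\ref{eq: bounds on Delta in S})).

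Next I would pass to projective coordinates. Writing (\ref{eq:F1}) as
\begin{equation*}
F^{2n}(z_0,w_0)=\bigl[\,(-\delta)^n(z_0+\Delta_1^n):(-\delta)^n(w_0+\Delta_2^n):1\,\bigr]=\bigl[\,z_0+\Delta_1^n:w_0+\Delta_2^n:(-\delta)^{-n}\,\bigr]
\end{equation*}
in $\P^2$, and using that on $K$ we have $|\re w_0|>R_0$ together with $|\Delta_2|<1$, the denominator $w_0+\Delta_2$ is uniformly bounded away from $0$ on $K$. Letting $n\to\infty$ gives
\begin{equation*}
F^{2n}(z_0,w_0)\longrightarrow [\,z_0+\Delta_1:w_0+\Delta_2:0\,]=h_1(z_0,w_0)
\end{equation*}
uniformly on $K$, where the right-hand point lies on $\ell_\infty$ with affine slope $(z_0+\Delta_1)/(w_0+\Delta_2)$, matching (\ref{eq:limit h1}). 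The same argument applied to (\ref{eq:F2}), using $|\re z_0|>R_0$ to keep $z_0+\Delta_1$ away from $0$, yields $F^{2n+1}\to h_2$ uniformly on $K$.

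With both subsequential limits established uniformly on compacta, the family $\{F^n\}$ is in particular normal with respect to the $\P^2$-structure on $W^{ab}$, so $W^{ab}$ is contained in a Fatou component $\Omega^{ab}$. The only genuine point of care, which I expect to be the least routine part, is the uniform lower bound on the denominators $w_0+\Delta_2$ and $z_0+\Delta_1$: this is where the choice $R_0>2$ in the construction of $W$ must be used, together with the estimate $\Delta<1$ coming from Lemma~\ref{lem:control of f}; everything else is a direct consequence of the formulas (\ref{eq:F1}),~(\ref{eq:F2}) and the geometric dominator $\sum\delta^{-j}$.
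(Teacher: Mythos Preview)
Your argument is correct and follows essentially the same route as the paper's proof: the paper simply observes that forward invariance of $W\subset S$ gives the bound~(\ref{eq: bounds on Delta in S}), from which uniform convergence of $F^{2n}$ and $F^{2n+1}$ to $h_1,h_2$ is immediate via the expressions~(\ref{eq:F1}),~(\ref{eq:F2}), and then concludes that each open connected set $W^{ab}$ lies in a Fatou component. You have spelled out in more detail the passage to projective coordinates and the lower bound on the denominators $w_0+\Delta_2$, $z_0+\Delta_1$ (using $|\re z_0|,|\re w_0|>R_0>2$ and $\Delta<1$), which the paper leaves implicit.
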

\begin{proof}
Since $W\subset S$ and is forward invariant by Proposition~\ref{prop: invariance W}, (\ref{eq: bounds on Delta in S}) holds hence   $F^{2n}$ and $F^{2n+1}$ converge uniformly on $W$ to $h_1,h_2$ respectively, hence $W$ is contained in the Fatou set. Since  each $W^{ab} $ is open and connected it is contained in a unique Fatou component that we denote by $\Omega^{ab}$. 
\end{proof}

We will see in Proposition \ref{prop:geometry of Omega} that in fact the components $\Omega^{ab}$ are all distinct and that the notation $\Omega^{ab}$ matches the definition of $A^{ab}$ given 
   in Section~\ref{sec:computing limit functions} for a general set $A$. 

\begin{prop}\label{prop:h1 h2}
Both  $h_1$ and $h_2$ have (generic) rank $1$ on $W$, and $h_1 \ne h_2$.
\end{prop}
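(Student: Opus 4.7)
The plan rests on the identity $h_2 = -\delta/h_1$ coming from (\ref{eq:limit h2}), so once I control $h_1$ the other two claims follow almost for free. The one key input is the uniform estimate $|\Delta_1(z,w)|,|\Delta_2(z,w)|<1$ on $W$ from~(\ref{eq: bounds on Delta in S}), combined with the fact that each connected component $W^{ab}$ contains $W_0^{ab}$ (or its image under a few iterates), hence points with arbitrarily large real parts while staying in the relevant quadrant of sectors.

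First I would show that $h_1$ is non-constant on each $W^{ab}$. Assume for contradiction that $h_1\equiv c\in\C$ on $W^{ab}$. Then by (\ref{eq:limit h1}),
\begin{equation*}
z-cw \;=\; c\,\Delta_2(z,w)-\Delta_1(z,w) \quad\text{on } W^{ab},
\end{equation*}
so the left-hand side is uniformly bounded by $|c|+1$. Fixing $w$ with $|\re w|$ large and letting $|\re z|\to\infty$ inside $W^{ab}$ (which is possible because $W^{ab}$ contains a full product of sectors with real part larger than $R_0$) makes the left-hand side unbounded, a contradiction. If instead $c=\infty$, then $w+\Delta_2(z,w)\equiv 0$ on $W^{ab}$ forces $|w|<1$ there, again contradicting unboundedness. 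Since $h_1$ is a holomorphic map into the one-dimensional target $\hat{\C}$, non-constancy on a connected open set is equivalent to having generic rank one, which gives the rank claim for $h_1$. The analogous statement for $h_2=-\delta/h_1$ is immediate: $h_2$ is non-constant (equivalently, of generic rank one) on precisely the open set where $h_1$ is non-constant.

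For the separation $h_1\neq h_2$, if instead $h_1\equiv h_2$ on some non-empty open subset of $W$, then $h_1^2\equiv -\delta$ there, so by the identity principle $h_1$ would be locally equal to one of the two fixed square roots of $-\delta$, contradicting what was just proved. Hence $h_1\not\equiv h_2$ on $W$.

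I do not foresee a real obstacle in this argument: everything reduces to the uniform boundedness of $\Delta_1,\Delta_2$ on $W$ from~(\ref{eq: bounds on Delta in S}) against the unboundedness of each $W^{ab}$. The only place that requires a touch of care is making sure the path along which one sends $\re z\to\infty$ actually stays inside $W^{ab}$, which is clear from the explicit description of $W_0$ in terms of sectors.
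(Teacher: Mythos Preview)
Your argument is correct and follows essentially the same route as the paper's own proof: both use the bound $\Delta<1$ on $W$ from~(\ref{eq: bounds on Delta in S}) to derive a linear inequality between $|z_0|$ and $|w_0|$ that contradicts the unboundedness of $W^{ab}$ in the $z$-direction, and both deduce $h_1\neq h_2$ from $h_1 h_2=-\delta$ together with the non-constancy of $h_1$. The only cosmetic difference is that you rearrange the equation as $z-cw=c\Delta_2-\Delta_1$ whereas the paper writes the corresponding chain of triangle inequalities directly.
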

\begin{proof}
Recall that $\Delta(z,w)<1$ on $W$ by (\ref{eq: bounds on Delta in S}). 
Since $h_i(W)\subset\ell_\infty$, $h_1$ and $h_2$ either have generic rank 1 or are constants.  Suppose by contradiction that $h_1=c$ is constant.
If $|c|\neq\infty$, then one has:
$$
|z_0|-\Delta(z_0,w_0)\leq \left|z_0+\Delta_1(z_0,w_0)\right|=|c|\left|w_0+\Delta_2(z_0,w_0))\right|\leq |c||w_0|+|c|\Delta(z_0,w_0),
$$
hence 
$$
|z_0|\leq |c||w_0|+(|c|+1),
$$
contradicting the fact that $(z_0,w_0)$ could be any point in   $W$, which is unbounded in the $z$ direction for any choice of $w$. 
If $c=\infty$,  we have $|w_0| \le 1$,  again   a contradiction. 
It follows  that $h_1 \ne h_2$. Indeed,    $h_1 \cdot h_2=-\delta$ is constant,  if we had $h_1=h_2$ it would follow  that $h_1^2$ (and hence $h_1$) would be constant as well, contradicting the argument above.
\end{proof}

\subsection{Construction of an absorbing set}\label{sec:absorbing}

Let $\Omega^{ab}$ with $a,b \in \{+,-\}$  be the Fatou components defined in  Proposition~\ref{prop:existence of FC} and let  $$\Omega:=\bigcup_{ab}\Omega^{ab}.$$  Since each $\Omega^{ab}$ is connected, $\Omega$ consists of at most 4 Fatou components. 
This section is devoted to find an absorbing set $W_I$ for $\Omega$ under $F$.  
 Its existence will be used in Section~\ref{sec:limit sets} to show that the Fatou components $\Omega^{ab}$ are all distinct and to describe both their limit sets and their geometric structure.  We use an argument based on harmonic functions  used also in \cite{FornaessShortCk}, \cite{henon1}, \cite{BSZ}.

 \begin{defn}[Absorbing sets] A set $A$ is \emph{absorbing} for an open  set $\Omega\supset A $ under a map  $F$   if for any compact $K\subset\Omega$ there exists $N>0$ such that 
 $$
 F^{n}(K)\subset A \text{ for all $n\geq N$.}
 $$
 \end{defn}
If $A$ is absorbing for $\Omega$, then $\Omega=\bigcup_{n}F^{-n}(A)$. 

 Fix $C\ge 1$ and let 
 $$
 I=I(C):=\{z\in \C\:\,|\im z|^2 < |\re z|^2-C^2\}=\{ z\in \C\,:\,\re (z^2) > C^2 \} \subset \mathcal{S}.
 $$
 Notice that if $z\in I$, then $|\re z|> C$. 
 
 Define 
 $$
W_I=W_{I}(C):=\Omega \cap\{(z,w) \in \C^2: F^n(z,w)\in I\times I \text{ for all $n\geq0$}\}.
 $$
\begin{prop} We have that $W_I^{ab}\neq\emptyset$ for all $a,b\in\{+,-\}$. For every $a,b\in\{+,-\}$,
$$
F(W_I^{ab})\subset W_I^{(-b)a}.
$$  
The sets $W_I^{++}\cup W_I^{--}$, $W_I^{-+}\cup W_I^{+-}$ are both forward invariant under $F^2$.  Moreover $F^{2n}$ and $F^{2n+1}$ are convergent on $W_I$. 
\end{prop}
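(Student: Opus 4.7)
The plan is to verify the four assertions sequentially, relying on Proposition~\ref{prop: invariance W}, Proposition~\ref{prop:existence of FC}, and Lemma~\ref{lem:forever in S}.

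First, for nonemptiness, I claim that for every sufficiently large $n$ the set $W_n$ is contained in $W_I$. Since $W_n$ rotates through the four quadrants with $n\bmod 4$, this immediately yields $W_I^{ab}\neq\emptyset$ for each $a,b$. The key observation is that for $z\in\mathcal{W}_{k,R}$ with $k<1$ one has $\re(z^2)=(\re z)^2-(\im z)^2>(1-k^2)R^2$, so $\mathcal{W}_{k,R}\subset I$ as soon as $(1-k^2)R^2>C^2$. Applied to $k_n=1-1/(n+2)$ and $R_n=(\delta/2)^{n/2}R_0$, the factor $1-k_n^2$ vanishes only polynomially in $1/n$ while $R_n^2$ grows exponentially (since $\delta>2$), so the inequality is met for all large enough $n$. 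By Proposition~\ref{prop: invariance W}, $F^m(W_n)\subset W_{n+m}$, so the full forward orbit of $W_n$ stays in $I\times I$, and $W_n\subset W_I$.

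For the quadrant dynamics, fix $P=(z_0,w_0)\in W_I^{ab}$. Its whole orbit lies in $I\times I\subset S$, and $|\re w_0|>C\geq 1>1/\delta$, so Lemma~\ref{lem:forever in S} gives $F(P)\in S^{(-b)a}$; in particular $F(P)$ has real parts of the correct signs. The orbit condition is inherited, $F^m(F(P))=F^{m+1}(P)\in I\times I$ for every $m\geq 0$. Finally, since $F(W^{ab})\subset W^{(-b)a}$ by Proposition~\ref{prop: invariance W} and Fatou components are connected, one deduces $F(\Omega^{ab})\subset\Omega^{(-b)a}$, so $F(P)\in\Omega^{(-b)a}$. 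Combining these three points yields $F(P)\in W_I^{(-b)a}$. Iterating once more shows that $F^2$ swaps $W_I^{++}\leftrightarrow W_I^{--}$ and $W_I^{-+}\leftrightarrow W_I^{+-}$, so both unions are $F^2$-invariant.

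For convergence of $F^{2n}$ and $F^{2n+1}$ on $W_I$, I would argue by normality on the Fatou component. Since $W_I\subset\Omega$ is contained in the Fatou set, $\{F^n\}$ is a normal family on each $\Omega^{ab}$ as maps to $\P^2$. By Proposition~\ref{prop:existence of FC}, $F^{2n}\to h_1$ and $F^{2n+1}\to h_2$ on the open subset $W^{ab}\subset\Omega^{ab}$; any subsequential normal limit of $F^{2n}$ on $\Omega^{ab}$ must agree with $h_1$ on $W^{ab}$ and, by the identity principle, equals the unique holomorphic extension of $h_1$. Hence the full sequence $F^{2n}$ converges on $\Omega^{ab}\supset W_I^{ab}$, and the same reasoning handles $F^{2n+1}$.

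The main obstacle is the quantitative threshold in the nonemptiness claim: one must confirm that exponential growth of $R_n^2$ outpaces the polynomial decay of $1-k_n^2$ for the fixed constant $C\geq 1$. With $\delta>2$ this is an elementary estimate, but it is the only step where the explicit definitions of $k_n$ and $R_n$ truly enter.
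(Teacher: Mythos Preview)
Your argument is correct, but it takes a more elaborate route than the paper's three-line proof. For nonemptiness the paper simply notes that the real points $\{(z,w):a\re z>M,\ b\re w>M,\ \im z=\im w=0\}$ lie in $W_I^{ab}$ for $M$ large (their orbits stay real with growing modulus, hence in $I\times I$, and they lie in $W\subset\Omega$); your detour through $W_n\subset W_I$ works too, though you should also record the estimate $(1-k_n^2)R_{n-1}^2>C^2$ for the $w$-factor of $W_n=\mathcal W_{k_n,R_n}\times\mathcal W_{k_n,R_{n-1}}$---the same exponential-vs-polynomial comparison applies. For the quadrant dynamics the paper just invokes Corollary~\ref{cor:points in sets contained in S} with $A=W_I$; your version is essentially the same, and in fact you make explicit the step $F(\Omega^{ab})\subset\Omega^{(-b)a}$ that the paper tacitly uses to see $W_I$ is forward invariant. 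For convergence the paper argues directly: orbits in $W_I$ stay in $S$, so the bound~(\ref{eq: bounds on Delta in S}) forces the series in~(\ref{eq:F1}),(\ref{eq:F2}) to converge. Your normality/identity-principle route is valid but more circuitous, and the direct estimate has the advantage of delivering the explicit formulas for $h_1,h_2$ on $W_I$ that are used later in Section~\ref{sec:limit sets}.
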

\begin{proof}
Each $W_I^{ab}$ contains the set $\{(z,w)\in\C^2: a\re z> M,\ b\re w> M,\ \Im z=\Im w=0\}$ for $M$ sufficiently large. The set 
$W_I\subset S$  is forward invariant hence Corollary~\ref{cor:points in sets contained in S} applies. Convergence of even and odd iterates follow by (\ref{eq: bounds on Delta in S}.)
\end{proof}

It will turn out that $W_I$ is open as well (Proposition~\ref{prop:W_I open}). 

%
The rest of this  section is devoted to proving the following proposition.

\begin{prop}\label{prop:Absorbing WI}

The set $W_I$  is absorbing for $\Omega$ under $F$, that is, 
$$\Omega=\bigcup_n F^{-n} (W_{I})=:\AA_I.$$
\end{prop}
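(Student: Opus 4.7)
The goal is to prove $\Omega=\AA_I$. The inclusion $\AA_I\subset\Omega$ follows since $W_I\subset\Omega$, the Fatou set is completely invariant, and the components $\Omega^{ab}$ form a cycle under $F$, so any preimage of $W_I$ lying in the Fatou set must itself be in $\Omega$. The substantive direction is $\Omega\subset\AA_I$, equivalent by the definition of $W_I$ to the statement that for every compact $K\subset\Omega$ there exists $N$ such that $F^n(K)\subset I\times I$ for all $n\ge N$. As a preliminary I would verify that $W\subset W_I$, provided $R_0$ is chosen large enough depending on $C$. Indeed, a point $(z,w)\in W_n$ satisfies $\re(z^2)\ge(1-k_n^2)R_n^2$, and with $k_n=1-\frac{1}{n+2}$ and $R_n=\left(\frac{\delta}{2}\right)^{n/2} R_0$ this is uniformly bounded below in $n$ by a positive constant times $R_0^2$ (using $\delta>2$); combined with Proposition~\ref{prop: invariance W}, this yields $W\subset W_I$, hence $W^{ab}\subset W_I\cap\Omega^{ab}$ for each $ab$.

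For a general compact $K\subset\Omega^{ab}$ (reducing to one component at a time), I would follow the pluriharmonic-function technique of \cite{FornaessShortCk,henon1,BSZ}. Consider
$$
\phi_n(p):=\re\bigl(z_n(p)^2\bigr),
$$
which is pluriharmonic on $\C^2$. Since $w_{n+1}=z_n$, uniform divergence $\phi_n\to+\infty$ on $K$ implies $F^n(K)\subset I\times I$ for all $n$ large. Using (\ref{eq:F1})--(\ref{eq:F2}),
\begin{align*}
\phi_{2n}(p)&=\delta^{2n}\,\re\bigl((z_0+\Delta_1^n(p))^2\bigr), &
\phi_{2n+1}(p)&=\delta^{2n+2}\,\re\bigl((w_0+\Delta_2^{n+1}(p))^2\bigr),
\end{align*}
so it suffices to show that the pluriharmonic functions $u_n:=\re((z_0+\Delta_1^n)^2)$ and $v_n:=\re((w_0+\Delta_2^{n+1})^2)$ are bounded below by a strictly positive constant uniformly on $K$ for $n$ large. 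On $W^{ab}$ this is immediate from (\ref{eq: bounds on Delta in S}): the limits $z_0+\Delta_1$ and $w_0+\Delta_2$ lie in the wedge $\mathcal{S}$, whose defining condition is precisely $\re(\cdot^2)>0$.

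To propagate the estimate from $W^{ab}$ to all of $\Omega^{ab}$, I would invoke a normal-family argument. The holomorphic families $\{z_0+\Delta_1^n\}=\{z_{2n}/(-\delta)^n\}$ and $\{w_0+\Delta_2^{n+1}\}=\{z_{2n+1}/(-\delta)^{n+1}\}$ on $\Omega^{ab}$ should be locally uniformly bounded, by using the $\P^2$-convergence $F^{2n}\to h_1$ from Proposition~\ref{prop:existence of FC} to control their growth rates; the identity principle then forces every subsequential limit to coincide throughout $\Omega^{ab}$ with the values $z_0+\Delta_1$, $w_0+\Delta_2$ already known on the open subset $W^{ab}$. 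The main obstacle, and the technical heart of the argument, is the positivity claim on all of $\Omega^{ab}$: a priori the holomorphic limits could leave $\mathcal{S}$ somewhere in $\Omega^{ab}\setminus W^{ab}$. Ruling this out requires showing that at any hypothetical point $p\in\Omega^{ab}$ with $z_0+\Delta_1(p)\notin\mathcal{S}$, the scalar $\phi_{2n}(p)$ would fail to tend to $+\infty$, whereas the direction of escape of $F^{2n}(p)$ to $\ell_\infty$, prescribed by the rank-1 function $h_1(p)=(z_0+\Delta_1(p))/(w_0+\Delta_2(p))$ together with its continuity from $W^{ab}$, would force a contradictory asymptotic behavior.
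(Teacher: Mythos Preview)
Your proposal has a genuine gap at its core. The normal-family claim for $\{z_{2n}/(-\delta)^n\}$ on $\Omega^{ab}$ is not justified: $\P^2$-convergence $F^{2n}\to h_1$ controls only the \emph{ratio} $z_{2n}/w_{2n}$ and the fact that $|w_{2n}|\to\infty$, not the growth \emph{rate} of $|z_{2n}|$. There is no a priori reason why $|z_{2n}|\lesssim\delta^n$ on a general compact in $\Omega^{ab}$; establishing local boundedness of these sequences is tantamount to showing that the conjugacy $\phi=\lim L^{-n}\circ F^n$ extends holomorphically to $\Omega$, which in the paper is proved \emph{using} the absorbing property of $W_I$ (Proposition~\ref{prop:conjugacy}), not the other way around. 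So the argument is circular at this step. You also acknowledge but do not resolve the positivity obstacle in your final paragraph; ``would force a contradictory asymptotic behavior'' is not a proof, and without it you cannot conclude $\re\bigl((z_0+\Delta_1)^2\bigr)>0$ on $\Omega^{ab}\setminus W^{ab}$.

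The paper avoids this circularity by working with the \emph{normalized} harmonic functions $u_n(p):=-\re(z_n^2)/n$ rather than $\re(z_n^2)$ itself (Lemma~\ref{lem:harmonic functions}). The division by $n$ is the key device: it converts the coarse upper bound $|z_n|\le c(M+\epsilon)^n$, available on compact subsets of $\Omega\setminus\XX$ from \cite{BSZ} with $\XX=\{h_1=0,\infty\}$, into a uniform bound $u_n\le\log M$, which is then pushed across the analytic set $\XX$ by a removability argument (Proposition~\ref{prop: harmonic bounded outside analytic set}). On $W$ one still has $u_n\to-\infty$ since $\re(z_n^2)\gtrsim\delta^{n/2}$ there, and a point $P\notin\AA_I$ satisfies $u_{n_k}(P)\ge-\epsilon$ along a subsequence. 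The contradiction is obtained from the mean value property for $u_N$ on a holomorphic disk through $P$ whose boundary meets $W$ in positive measure (Lemmas~\ref{lem:Good holo disks} and~\ref{MVP}): the contribution from the portion in $W$ is driven to $-\infty$ while the remainder stays bounded by $\log M$.
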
 
Let
$$\mathcal{X}:=\{ (z,w)\in \Omega \,:\,h_1(z,w)=0,\infty \}.$$
Since $\mathcal X$ is an analytic set, being the union of the $0$-set and the $\infty$-set of a meromorphic function, it is locally a finite union of $1$-complex-dimensional varieties (see \cite{Chirka1989}).

Let $K$ be a compact subset of $\Omega \setminus\mathcal{X}$, hence $h_i(P)\ne 0,\infty$ for all $P\in K$, and $i=1,2$. Define
\begin{equation}
M:=\max_{K, i }|h_i|<\infty.
\end{equation}
Note that  $M>1$ because $h_2=-\frac{\delta}{h_1}$ and $\delta>1$. 
By Corollary 2.3 in \cite{BSZ} if $\epsilon>0$ is sufficiently small there exists a constant $c$ such that for every $(z_0,w_0)\in K$
 \begin{align}
 \label{eqtn:sandwich growth}
 |z_n|&\leq c(M+\epsilon)^n.\\
 \label{eqtn:sandwich growth2}
  |w_n|=|z_{n-1}|&\leq c(M+\epsilon)^{n-1}.
 \end{align}

The proof of Proposition~\ref{prop:Absorbing WI} relies on the following technical lemma. Recall that for $P=(z_0,w_0)$, we write $F^n(P)=(z_n,w_n)$.

\begin{lem}\label{lem:harmonic functions}
Define the sequence of harmonic functions $u_n$ from  $\Omega$ to $\R$ as 
\begin{equation}\label{eq:def un}
u_n(z_0,w_0):=\frac{-\re( z_n^2)}{n}.
\end{equation}
 Then 
\begin{enumerate}
\item  Let  $K\subset \Omega$ compact.  Then there exists $M=M(K) $ and $N\in \N$ such that $u_n\le \log M$ on $K$ for $n>N$;
\item $u_n\ra -\infty$  uniformly on compact subsets of  $W$; 
\item If  $P\in \Omega\setminus \AA_I$,  for every  $\epsilon>0$ there is a  subsequence $n_k\ra\infty$ such that $u_{n_k}(P)\ge -\epsilon$.
 \end{enumerate}
\end{lem}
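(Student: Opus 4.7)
\emph{Strategy.} I would treat the three items in order, exploiting in each case that $u_n$ is harmonic on $\Omega$: since $F^n$ is entire on $\C^2$, the first coordinate $z_n$ is entire in $(z_0,w_0)$, so $\Re(z_n^2)$ is harmonic. For~(1), I would rewrite the recursion $z_{n+1}=f(z_n)-\delta z_{n-1}$ coming from $F(z,w)=(f(z)-\delta w,z)$ with $f(z)=e^{-z^2}$ as
\[
|f(z_n)|=e^{-\Re(z_n^2)}\le|z_{n+1}|+\delta|z_{n-1}|,
\]
whence $-\Re(z_n^2)\le\log(|z_{n+1}|+\delta|z_{n-1}|)$. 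On a compact $K\subset\Omega\setminus\mathcal X$, substituting the growth bound~\eqref{eqtn:sandwich growth}, $|z_j|\le c(M+\epsilon)^j$ with $M=\max_{K,i}|h_i|$, yields $u_n\le\frac{n+1}{n}\log(M+\epsilon)+O(1/n)$, so $u_n\le\log M'$ uniformly on $K$ for $n>N$ as soon as $M'>M+\epsilon$. For a general compact $K\subset\Omega$ meeting $\mathcal X$ I would extend the bound by combining this estimate on compact subsets of $\Omega\setminus\mathcal X$ with continuity of $u_n$ and the maximum principle, using that $\mathcal X$ is a proper analytic subvariety of $\Omega$.

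\emph{Part (2).} For $P\in W_m$, Proposition~\ref{prop: invariance W} gives $F^n(P)\in W_{m+n}$, so $z_n\in\mathcal W_{k_{m+n},R_{m+n}}$: $|\Re z_n|>R_{m+n}$ and $|\Im z_n|<k_{m+n}|\Re z_n|$. Then
\[
\Re(z_n^2)=(\Re z_n)^2-(\Im z_n)^2\ge(1-k_{m+n}^2)R_{m+n}^2\ge\frac{R_0^2(\delta/2)^{m+n}}{m+n+2},
\]
using $1-k_j^2\ge1/(j+2)$ and $R_j^2=R_0^2(\delta/2)^j$. Consequently
\[
u_n(P)\le-\frac{R_0^2(\delta/2)^{m+n}}{n(m+n+2)}\longrightarrow-\infty
\]
exponentially in $n$, uniformly over $P\in W_m$. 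Any compact $K\subset W$ is covered by finitely many $W_{m_i}$, and the worst-case bound (smallest $m_i$) yields uniform convergence $u_n\to-\infty$ on $K$.

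\emph{Part (3) and main obstacle.} If $P\in\Omega\setminus\AA_I$, then $F^n(P)\in\Omega\setminus W_I$ for every $n\ge0$, which forces some later iterate $F^{n+m}(P)$ to fall outside $I\times I$; hence the set of $j$ with $(z_j,w_j)\notin I\times I$, equivalently $\Re(z_j^2)\le C^2$ or $\Re(z_{j-1}^2)\le C^2$, is infinite, and so is the set of $n$ with $\Re(z_n^2)\le C^2$. Along any subsequence $n_k\to\infty$ inside this set,
\[
u_{n_k}(P)=-\frac{\Re(z_{n_k}^2)}{n_k}\ge-\frac{C^2}{n_k}\longrightarrow 0,
\]
so $u_{n_k}(P)\ge-\epsilon$ as soon as $n_k>C^2/\epsilon$. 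The main technical point of the whole lemma lies in~(1): the constant $M=\max_K|h_i|$ diverges along $\mathcal X$ and the direct substitution of~\eqref{eqtn:sandwich growth} breaks down there, so extending the upper bound to compact sets touching $\mathcal X$ requires harmonicity of $u_n$ together with the pluripolar nature of $\mathcal X$. Parts (2) and (3), by contrast, are quantitative consequences of the explicit construction of $W$ and the definitions of $I$ and $\AA_I$.
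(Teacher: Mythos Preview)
Your argument is correct and tracks the paper's proof closely. For~(1) and~(3) you give the direct versions of the paper's contrapositive arguments, using the same ingredients: the recursion $e^{-\Re(z_n^2)}\le|z_{n+1}|+\delta|z_{n-1}|$ combined with~\eqref{eqtn:sandwich growth} for~(1), and the fact that $P\notin\AA_I$ forces infinitely many iterates outside $I\times I$ for~(3). For~(2) your bound $\Re(z_n^2)\ge(1-k_{m+n}^2)R_{m+n}^2$, read directly off the definition of $\mathcal W_{k,R}$, is a cleaner shortcut than the paper's route, which first shows $|z_n|^2\ge\delta^n$ via~\eqref{eq:F1}--\eqref{eq:F2} and then controls $\arg(z_n^2)$ through Lemma~\ref{lem:tan 2theta}. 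The one place where you are vaguer than the paper is the extension of~(1) across $\mathcal X$: your appeal to the maximum principle and pluripolarity is the right idea, and the paper implements it concretely (Proposition~\ref{prop: harmonic bounded outside analytic set}) by covering $K\cap\mathcal X$ with finitely many small polydisks whose \v Silov boundaries avoid $\mathcal X$, so that the bound on $L\setminus U_\eta(\mathcal X)$ propagates to all of $K$.
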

\begin{lem} \label{lem:tan 2theta}
Let $z\in\C$, $k<1$.
If 
\begin{align}
\label{eq:k z}\left|\frac{\im z}{\re z}\right|&\leq k<1 \text{ then  }\\
\label{eq:k squared} \left|\frac{\im z^2}{\re z^2}\right|&\leq \frac{2k}{1-k^2}.
\end{align}
\end{lem}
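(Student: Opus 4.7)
The plan is to reduce the statement to an elementary algebraic identity by writing $z=x+iy$ with $x=\re z$ and $y=\im z$. The hypothesis $|\im z/\re z|\le k$ implicitly requires $x\ne 0$ (otherwise the ratio is undefined), and it is equivalent to $|y|\le k|x|$. Since $k<1$, I would immediately obtain $y^2\le k^2 x^2<x^2$; this is the key inequality that makes the bound work.

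From the expansion $z^2=(x^2-y^2)+2ixy$ I would read off $\re(z^2)=x^2-y^2$ and $\im(z^2)=2xy$. The bound $y^2<x^2$ established above guarantees $\re(z^2)>0$, so the quotient in \eqref{eq:k squared} is well defined (and in fact $z^2$ lies in the right half plane). Then
\[
\left|\frac{\im(z^2)}{\re(z^2)}\right|=\frac{2|x||y|}{x^2-y^2}\le\frac{2k\,x^2}{x^2-k^2x^2}=\frac{2k}{1-k^2},
\]
which gives \eqref{eq:k squared}. There is no real obstacle here: the only thing to observe carefully is that the strict inequality $k<1$ is used twice (once to guarantee that the denominator $x^2-y^2$ is positive, and once to replace it by the uniform lower bound $(1-k^2)x^2$), and both uses are automatic from the hypothesis. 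One could equivalently write $z$ in polar form and invoke the double-angle formula $\tan(2\theta)=2\tan\theta/(1-\tan^2\theta)$, which makes the geometric content transparent, but the Cartesian computation above is the most direct.
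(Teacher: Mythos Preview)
Your proof is correct and is essentially the same argument as the paper's: the paper writes $z=re^{i\theta}$ and invokes the double-angle formula $\tan(2\theta)=2\tan\theta/(1-\tan^2\theta)$, which is exactly the identity your Cartesian computation unpacks (as you yourself note at the end). There is no substantive difference between the two presentations.
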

\begin{proof}
Let $z=re^{i\theta}$ satisfying (\ref{eq:k z}); then $|\tan\theta|\leq k<1$. Hence since  $z^2=r^2e^{2i\theta}$,
$$
 \left|\frac{\im z^2}{\re z^2}\right|=|\tan(2\theta)|=\left|\frac{2\tan\theta}{1-\tan^2\theta}\right|\leq \frac{2k}{1-k^2}.
$$
\end{proof}
 
 The following fact is certainly  known, however we give a proof in   the Appendix. Given a set $A$, let $\mathring{A}$ denote its interior. 
 
\begin{prop}\label{prop: harmonic bounded outside analytic set}
Let $L$ be a compact set and $H$ be an analytic subset of dimension one of $\C^2$. For any compact $K$ s.t. $K\subset \mathring{L}$ there exists $\eta=\eta(K,L,H)$ such that for any $u$ harmonic defined in a neighborhood of $L$ and such that 
$$
u\leq\alpha<\infty \text{ on $L\setminus( \eta-$neighborhood of $H$) }
$$
we have 
$$
u\leq\alpha \text{ on $K$}
$$
\end{prop}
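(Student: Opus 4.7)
The plan is to reduce to the case $\alpha=0$ by considering $u-\alpha$ (still harmonic), and then to exploit the maximum principle together with the potential-theoretic smallness of the complex analytic set $H$: since $H$ has complex codimension $1$ in $\C^2$, it has real codimension $2$ in $\R^4$, so its transverse intersections with real $3$-dimensional hypersurfaces have vanishing surface measure, and its $\eta$-neighborhoods shrink to $H$ with volume controlled by $\eta^2$.

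Concretely, I would first choose a compact $L'$ with $C^\infty$ boundary such that $K\subset \mathring{L'}\subset L'\subset \mathring{L}$, and by a small generic perturbation of $L'$ arrange that $\partial L'$ is transverse to $H$; then $\partial L'\cap H$ is real $1$-dimensional in the real $3$-manifold $\partial L'$, and hence has $3$-dimensional surface measure zero. For $p\in \mathring{L'}$, harmonicity of $u$ on a neighborhood of $L'$ gives the Poisson representation
$$u(p)=\int_{\partial L'} u(\xi)\,d\omega^{L'}_p(\xi),$$
which I would split as the contribution from $\partial L'\setminus H_\eta$, which is $\leq 0$ by the hypothesis $u\leq 0$ on $L\setminus H_\eta$, plus the contribution from $\partial L'\cap H_\eta$, which is at most $\omega^{L'}_p(\partial L'\cap H_\eta)\cdot\sup_{\partial L'}u$. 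Since the harmonic measure $\omega^{L'}_p$ is absolutely continuous with respect to surface measure on smooth $\partial L'$, one has $\omega^{L'}_p(\partial L'\cap H)=0$; continuity of measure together with compactness of $K$ then yields $\sup_{p\in K}\omega^{L'}_p(\partial L'\cap H_\eta)\to 0$ as $\eta\to 0$.

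The main obstacle is that this produces only an estimate of the form $u(p)\leq\varepsilon(\eta)\sup_L u$ on $K$, rather than the sharp bound $u\leq 0$, since $\sup_L u$ depends on $u$ and is a priori unbounded above. I would resolve this by a bootstrap: iterate the same Poisson-splitting argument on a nested family $K\subset L_n\subset \cdots\subset L_1\subset L_0=L$ of smooth auxiliary compacts with boundaries transverse to $H$. For $\eta$ small enough (depending only on $K$, $L$, $H$), the harmonic-measure factor is $<1$ at every step, so finitely many iterations combined with the maximum principle, which forces any positive supremum of $u$ on $L$ to be attained on $\partial L\cap H_\eta$, drive $\sup_L u$ down to zero and yield $u\leq\alpha$ on $K$.
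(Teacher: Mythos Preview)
Your bootstrap does not close. With a fixed finite chain $K\subset L_n\subset\cdots\subset L_0\subset\mathring L$, the Poisson splitting at each level gives at best
\[
\sup_{K}u^+\;\le\;\Big(\prod_{j=0}^{n-1}c_j(\eta)\Big)\,\sup_{\partial L_0}u^+,
\]
with each $c_j(\eta)<1$. The right-hand side is still proportional to $\sup_{\partial L_0}u^+$, and nothing in the hypotheses bounds that quantity independently of $u$: the proposition asks for a \emph{single} $\eta=\eta(K,L,H)$ valid for every admissible $u$, while for any fixed $\eta>0$ the supremum $\sup_L u^+$ can be made as large as you like. A finite product of factors less than $1$ cannot absorb an unbounded factor, and you cannot iterate infinitely because the nested family is finite and must stop at $K$. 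Your observation that a positive supremum of $u$ on $L$ must lie on $\partial L\cap H_\eta$ is correct but only relocates the uncontrolled term; it does not shrink it.

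The paper's argument sidesteps any dependence on $\sup_L u$ by exploiting the \emph{complex} structure of $H$, not merely its real codimension. The key geometric lemma is that around each $P\in H\cap K$ one can place an arbitrarily small real $2$-torus $\T_P=\{|z-z_P|=\delta\}\times\{|w-w_P|=b\delta\}$, the \v Silov (distinguished) boundary of a polydisk $B_P\subset L$, with $\T_P\cap H=\emptyset$. This uses that the tangent cone of a one-dimensional analytic set at $P$ is a finite union of complex lines $w-w_P=\alpha_j(z-z_P)$, and the torus of ratio $b$ avoids the line of slope $\alpha$ whenever $|\alpha|\neq b$. After covering $H\cap K$ by finitely many such $B_j$ and shrinking $\eta$ so that every $\T_j\subset L\setminus H_\eta$, the hypothesis gives $u\le\alpha$ on each $\T_j$; the maximum principle on the \v Silov boundary then yields $u\le\alpha$ on each $B_j$, hence on all of $K$. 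One step per polydisk, no iteration, and $\sup_L u$ never enters. (Note, incidentally, that this last maximum principle needs $u$ to be subharmonic along each coordinate disk; this holds for the pluriharmonic functions $u_n=-\Re(z_n^2)/n$ used in the application, which is the case you should have in mind.)
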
 


\begin{proof}[Proof of Lemma~\ref{lem:harmonic functions}]\hfill
\begin{enumerate}
\item  
Let $K$ be a compact subset of $\Omega$.  Let $\eta$ as obtained by applying  Proposition~\ref{prop: harmonic bounded outside analytic set} to a slightly larger compact set $L\subset\Omega$ and to the analytic set $\XX$. Let $U_\eta(\XX)$ be an $\eta$-neighborhood of $\XX$.
In view of Proposition~\ref{prop: harmonic bounded outside analytic set} it is enough to prove that there exists $N\in \N$ such that $u_n\le \log M$  for $n>N$ and for some $M$ on the set
$$
\tilde K:=K\setminus U_\eta(\XX)
$$
which is  a compact subset of $\Omega\setminus \XX$. Hence it is enough to prove the claim for any $K$ compact subset of $\Omega\setminus \XX$.

Fix $\epsilon>0$ suffciently small and let $M, c$ be as in (\ref{eqtn:sandwich growth}) and (\ref{eqtn:sandwich growth2}) for $K$. Suppose that there exists a subsequence $(n_j)$ and points $(z,w)=(z(j), w(j))\in K$  such that
$$-\frac{\re({z}_{n_j}^2)}{n_j}>\beta$$
for some $\beta$. We will show that $\beta\leq M$. 

Using (\ref{eqtn:sandwich growth}) and (\ref{eqtn:sandwich growth2}) we have that 
\begin{align*}
 c(M+\epsilon)^{n_j+1}&\geq   |z_{n_{j+1}}|=|e^{-z_{n_j}^2}-\delta w_{n_j}| \ge |e^{-z_{n_j}^2}|-\delta|w_{n_j}|\ge \\
    &\geq e^{-\re(z_{n_j}^2)}-\delta c(M+\epsilon)^{n_j-1} \geq  e^{\beta n_j}-\delta c (M+\epsilon)^{n_{j}-1}.
    \end{align*}
 Hence, using  $M>1$ and $\epsilon >0$ sufficiently small,
$$
    e^{\beta n_j}\leq \delta c (M+\epsilon)^{n_{j}-1}+ c(M+\epsilon)^{n_j+1}\leq c(\delta +1)(M+\epsilon)^{n_j+1}.
$$

Then
$$\beta \leq \frac{\log\big(c(\delta +1)\big)}{n_j}+\frac{n_j+1}{n_j}\,\, \log(M+\epsilon)\ra \log M$$
as $n_j\longrightarrow \infty$ and $\epsilon \longrightarrow 0$.

\item 
It is enough to show  that $u_n(z_0, w_0)\ra-\infty$ for any point $(z_0,w_0)\in W$ and it will follow for any compact subset of $W$.  Since $W$ is forward invariant, $F^n(z_0,w_0)\subset W\subset S$ for all $n\in \N$ and $\Delta(z_0,w_0)<1$ by (\ref{eq: bounds on Delta in S}). Using the explicit expression for iterates of $F$ given by (\ref{eq:F1}), (\ref{eq:F2}) we have 
$$
|z_n^2|=|z_n|^2=
 \begin{cases} \delta^{n}|z_0+\Delta_1^{n/2}(z_0,w_0)|^{2}\geq \delta^{n}|z_0-1|^2  \ \ \  \text{ if n even};\\
 \delta^{(n+1)}|w_0+\Delta_2^{(n+1)/2}(z_0,w_0)|^2\geq \delta^{n+1}|w_0-1|^2  \ \ \  \text{ if n odd}.
 \end{cases}
 $$
 
 In both cases, since  $|z_0|, |w_0|>R_0>2$ we obtain $|z_n^2|\geq  \delta^{n} $. Since $W=\bigcup_j W_j$ as defined in Section~\ref{sec:invariant open set}, $(z_0,w_0)\in W_j$ for some $j$, hence by Proposition~\ref{prop: invariance W}, 
$$
F^n(z_0,w_0)\in W_{j+n} \text{ for all $n\in\N$, }
$$ 

hence $\left|\frac{\im z_n}{\re z_n}\right|\leq k_{j+n}<1$   and by Lemma~\ref{lem:tan 2theta} we obtain
$$
\left|\frac{\im z_n^2}{\re z_n^2}\right|\leq \frac{2k_{j+n}}{1-k_{j+n}^2}=:\alpha_n\sim n \text{ as $n\ra\infty$},
$$
where the estimate   $\alpha_n\sim n$ as $n\ra\infty$ is computed using the explicit expression for   $k_{j+n}$.
It follows that 
  $$\delta^n\leq |z_n^2|=\sqrt{(\re (z_n^2))^2+(\im (z_n^2))^2}\leq \Re (z_n^2)\sqrt{1+\alpha_n^2}$$
   hence
  $\re (z_n^2)\geq \frac{\delta^{n}}{\sqrt{1+\alpha_n^2}}\sim\frac{\delta^{n}}{n}\geq \delta^{n/2}$ for $n$ large. Finally
  
  $$
  u_{n}(z_0,w_0)=-\frac{\Re(z_n^2)}{n}\leq -\frac{\delta^{n/2}}{n}\ra-\infty \text{ as $n\ra\infty$}\
  $$

\item 
 Suppose by contradiction that there exists $P=(z_0,w_0)\in \Omega\setminus\AA_I$,  $\epsilon>0$ and $N\in \mathbb{N}$ such that 
$$
u_n(z_0,w_0)=\frac{-\re z_n^2}{n} < - \epsilon \text{ for all $n\ge N$.}
$$ 
Hence there exists $N'>N$ depending on $\epsilon, C$ (where $C$ is the constant used to define $W_I$)  such that 
$$
\re(z_n^2)> \epsilon n> C^2 \text{ for all $n\geq N'$}.
$$

Since $w_n=z_{n-1}$ and since $P\in \Omega$ for hypothesis, we have that  $F^n(P)\in I\times I$ for all $n\ge N'$  hence  $P\in F^{-n}(W_I)\subset \AA_I$, a contradiction.
\end{enumerate}
\end{proof}

\begin{lem}[Good holomorphic disks]\label{lem:Good holo disks}
Let $P\in \Omega$, $W$ as  before. 
Then there exists $\phi:\overline{\D}\ra \Omega$ holomorphic such that
\begin{itemize}
\item $\phi(0)=P$
\item $\phi(\D)\Subset\Omega$ and $\partial \phi(\D)$ is analytic
\item The one-dimensional Lebesgue measure of $\partial \phi(\D)\cap W$ is greater than $0$. 
\end{itemize}
\end{lem}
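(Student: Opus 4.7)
The strategy is to build $\phi$ by postcomposing the holomorphic extension of a polynomial path from $P$ into $W$ with a Riemann map onto a thin ellipse around the parameter interval.

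Since $P\in\Omega=\bigcup_{a,b}\Omega^{ab}$, choose labels $a,b$ with $P\in\Omega^{ab}$; by Proposition~\ref{prop: invariance W} the set $W^{ab}$ is a non-empty open subset of the Fatou component $\Omega^{ab}$. Being open and connected in $\C^2$, $\Omega^{ab}$ is path-connected, so we can fix a continuous path $\sigma:[0,2]\to\Omega^{ab}$ with $\sigma(0)=P$ and $\sigma([1,2])\subset W^{ab}$, obtained by first joining $P$ to some point of $W^{ab}$ on $[0,1]$ and then wandering inside the open set $W^{ab}$ on $[1,2]$. Applying the Weierstrass approximation theorem componentwise, followed by a small linear correction to enforce $\gamma(0)=P$, we produce a polynomial map $\gamma:\C\to\C^2$ so close to $\sigma$ uniformly on $[0,2]$ that $\gamma([0,2])\subset\Omega^{ab}$ and $\gamma([1,2])\subset W^{ab}$. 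In particular $U:=\gamma^{-1}(\Omega^{ab})$ and $V:=\gamma^{-1}(W^{ab})$ are open subsets of $\C$ with $U\supset[0,2]$ and $V\supset[1,2]$.

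Now pick $\epsilon>0$ and consider the ellipse
$$E:=\left\{x+iy\in\C:\frac{(x-1)^2}{(1+\epsilon)^2}+\frac{y^2}{\eta^2}<1\right\},$$
elongated along $[0,2]$, which contains $0$ (since $1/(1+\epsilon)^2<1$). For $\eta>0$ small enough $\overline{E}\subset U$, while the upper arc $\partial E\cap\{\Re\zeta\in[1,2],\ \Im\zeta>0\}$ is contained in the open neighborhood $V$ of $[1,2]$ and has positive length. By the Riemann mapping theorem there exists a biholomorphism $\psi:\D\to E$ with $\psi(0)=0$; since $\partial E$ is real-analytic, the Schwarz reflection principle extends $\psi$ to a biholomorphism on a neighborhood of $\overline{\D}$.

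Set $\phi:=\gamma\circ\psi:\overline{\D}\to\Omega^{ab}\subset\Omega$. Then $\phi(0)=\gamma(0)=P$; $\phi(\overline{\D})=\gamma(\overline{E})$ is compact in $\Omega^{ab}$, so $\phi(\D)\Subset\Omega$; $\phi$ extends holomorphically past $\partial\D$, so $\partial\phi(\D)$ is analytic; and the arc $\psi^{-1}(\partial E\cap V)\subset\partial\D$ has positive $1$-Lebesgue measure and is mapped by $\phi$ into $W^{ab}\subset W$. The main technical point is simultaneously achieving $\gamma(0)=P$ exactly and the two inclusions $\gamma([0,2])\subset\Omega^{ab}$, $\gamma([1,2])\subset W^{ab}$, which is routine since a linear correction of norm at most the approximation error preserves both inclusions when the tolerance is small enough.
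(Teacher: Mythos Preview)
Your argument is correct and follows essentially the same idea as the paper: pick a point of $W^{ab}$ in the same Fatou component as $P$, join it to $P$ by a curve inside $\Omega^{ab}$, make the curve analytic, and take a thin complex neighborhood of the parameter interval as the disk. The paper compresses all of this into ``take a simple real-analytic curve through $P$ and $Q\in W^{ab}$, complexify it, and precompose with a M\"obius map so that $\phi(0)=P$'', together with the remark that, $W$ being open, it suffices that $\phi(\D)$ meet $W$.

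Your version is a more explicit realization of the same construction: Weierstrass approximation produces the analytic (in fact polynomial) curve $\gamma$, the ellipse $E$ plays the role of the complexified neighborhood, and the Riemann map $\psi$ replaces the paper's M\"obius adjustment. The payoff is that you verify the boundary-measure condition directly, by exhibiting an arc of $\partial E$ inside $\gamma^{-1}(W)$ and pulling it back under the real-analytic boundary extension of $\psi$; the paper leaves this step implicit. Nothing is lost and nothing essentially new is introduced.
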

\begin{proof}
Since $W$ is open it is enough to have  $\phi(\D)\cap W\neq\emptyset$ to ensure that the one-dimensional Lebesgue measure of $\partial \phi(\D)\cap W$ is greater than $0$. Let $a,b\in \{+,-\}$ such that $P\in \Omega^{ab}$. Since $W^{ab}\neq\emptyset$ for all $a,b\in\{+,-\}$ there exists $Q\in W^{ab}$. Since $\Omega^{ab} $ is connected and open there exists  a  simple real analytic curve passing through $P$ and $Q$ in $\Omega^{ab}$. Complexifying  this  curve we obtain a holomorphic  disc passing through $P$ that we can write as $\phi(\D)$ for some $\phi$ holomorphic defined in a neighborhood of $\D$.  Up to precomposing $\phi$ with a Moebius transformation we can assume that $P=\phi(0)$. 
\end{proof}

In our proof, we are going to use the mean value property for the harmonic functions $u_N$.

\begin{lem} [Mean value property for holomorphic disks]\label{MVP} 
Let $\D\subset\C$ be the open unit disk and $\phi:\overline{\D}\ra \Omega$ be a  holomorphic map. Let $u$ be harmonic on the holomorphic open disk $D=\phi({\D})$ and continuous up to the boundary of $D$.  Let $P_0:=\phi(0)$. Then 
$$
u(P_0)=\frac1{2\pi}\int_{\partial \D}u(\zeta)|\phi'(\zeta)|^{-1}d\zeta
$$
\end{lem}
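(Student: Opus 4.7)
The plan is to reduce the statement to the classical mean value property on the open unit disk $\D$ via the parameterization $\phi$. First I would set $v:=u\circ\phi$ and verify that $v$ is harmonic on $\D$ and continuous on $\overline{\D}$. Continuity is immediate since both $\phi$ and $u$ are continuous up to the boundary. Harmonicity is the standard fact that the pull-back of a harmonic function by a holomorphic map is harmonic: locally $u$ is the real part of a holomorphic function $U$ defined on a neighborhood of a point of $D$, and so $v=\re(U\circ\phi)$ is the real part of a holomorphic function on the corresponding open subset of $\D$.

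Next I would apply the classical mean value theorem on $\D$ to $v$, which yields
\[
u(P_0)=v(0)=\frac{1}{2\pi}\int_{0}^{2\pi}v(e^{i\theta})\,d\theta=\frac{1}{2\pi}\int_{\partial\D}u(\phi(\zeta))\,|d\zeta|,
\]
where $|d\zeta|$ is arc-length on $\partial\D$. This is the mean value identity up to notation; to match the formula in the statement one rewrites $|d\zeta|=|\phi'(\zeta)|^{-1}\,ds$, where $ds$ is arc-length on $\partial D=\phi(\partial\D)$. Indeed, away from the (at most finite, hence measure-zero) set of critical points of $\phi|_{\partial\D}$ one has $ds=|\phi'(\zeta)|\,|d\zeta|$, and this change of variables is justified by the assumption that $\partial\phi(\D)$ is analytic.

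There is no genuine obstacle in this argument: both the mean value property on $\D$ and the fact that harmonicity pulls back under holomorphic maps are completely standard, and the change of variables is legitimate because the critical set of $\phi|_{\partial\D}$ is discrete. The only subtlety is notational, namely interpreting $u(\zeta)$ in the integrand as $u(\phi(\zeta))$ and $d\zeta$ as the appropriate arc-length element, after which the identity is essentially a one-line consequence of the harmonicity of $u\circ\phi$ on $\D$.
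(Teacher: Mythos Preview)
Your argument is correct and follows the same route as the paper's: pull back $u$ via $\phi$ to a harmonic function on $\D$, apply the classical mean value property at the origin, and then rewrite the integral via the change of arc-length $ds=|\phi'(\zeta)|\,|d\zeta|$. The only cosmetic difference is that you justify harmonicity of $u\circ\phi$ by writing $u$ locally as $\Re U$ with $U$ holomorphic, whereas the paper invokes the chain rule $\nabla^2(u\circ\phi)=\nabla^2(u)\,|\phi'|^2$.
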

\begin{proof}
    Consider the function  $u\circ \phi:\overline\D\to\R$. First, note that it is harmonic on $\D$ and continuous up to the boundary. Indeed if $u:D\to\R$ is $\mathcal{C}^2$-smooth, then we can explicitly compute its Laplacian 
    $$\nabla^2(u\circ\phi)=\nabla^2(u)|\phi'|^2=0\,$$
    while if $u$ is not $\mathcal{C}^2$-smooth, the result follows by approximating $u$ with harmonic smooth functions.
    
    Hence for $u\circ \phi$ the classical Mean Value Property holds.  By computing $u(P_0)$ we get
\begin{equation}\label{eq: mean value generalized}
u(P_0)\,=\,u(\phi(0))\,=\,\frac1{2\pi}\int_{\partial \D}u(\phi(\eta))\,d\eta\,=\,\frac1{2\pi}\int_{\partial \D}u(\zeta)\,|\phi'(\zeta)|^{-1}\,d\zeta.
\end{equation}
\end{proof}

\begin{proof}[Proof of Proposition~\ref{prop:Absorbing WI}]
 
Let $P\in \Omega \setminus \AA_I$ and  $D:=\phi(\D) $ where $\phi$ is given by Lemma~\ref{lem:Good holo disks}.  Let $\mu$ be the pushforward  under  $\phi$ of the one-dimensional Lebesgue measure on $\partial \D$. Let $K $ be a compact subset of $W$ such that $\mu (K\cap \partial D)>0$.  

Let $\mu_{\text{good}}=\mu(\partial D\cap K)>0$ and $\mu_{\text{bad}}=\mu(\partial D\cap (\Omega \setminus K))$. Since $\Omega$ contains $D$,  $\partial D=(\partial D\cap K )\cup (\partial D\cap (\Omega\setminus K))$, and since $K$ is compact and $\Omega$ is  open, the sets in question are measurable.  

By Lemma~\ref{lem:harmonic functions} for any given $\MM>0$ there exists $N$ such that $u_N\leq -\MM$ on $K$, $u_N(P)\geq -\epsilon$ for  some $\epsilon>0$  since $P\in \Omega\setminus \AA_I$, and $u_N\leq \log M$  on  $\ov{D}$ (with $M=M(\ov{D})$). By the Mean value property (\ref{eq: mean value generalized}) for $u_N$ we have 
\begin{align*}
-\epsilon\leq u_N(P)&=\frac{1}{2\pi}\int_{\partial D} u_N(\zeta)|\phi'(\zeta)|d\zeta=\frac{1}{2 \pi}\int_{\partial D\cap K} u_N(\zeta)|\phi'(\zeta)|d\zeta+\frac{1}{2\pi}\int_{ \partial D\cap (\Omega \setminus K) } u_N(\zeta)|\phi'(\zeta)|d\zeta\leq\\
&\leq \frac{1}{2\pi} \left(-\MM \mu_{\text{good}}+ \log M\mu_{\text{bad}}\right)\cdot \sup_{\partial\D}|\phi'|^{-1}. 
\end{align*}
Since $\MM$ is arbitrarily large, this gives  a contradiction.  
\end{proof}
\begin{prop}\label{prop:W_I open}
The set  $W_I$ is open.
\end{prop}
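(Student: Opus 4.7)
The plan is to exhibit, around any $P \in W_I$, an explicit open neighborhood contained in $W_I$. The key idea is to show that the orbit of $P$ must eventually enter one of the open, forward-invariant sets $W_n$ from Section~\ref{sec:invariant open set}, which for $n$ large are contained in $I \times I$. Openness of $W_I$ then follows by combining the openness of $W_{N_0}$, continuity of $F^N$ for the appropriate $N$, and a finite-time continuity argument for the initial segment $F^0(P),\ldots,F^{N-1}(P)$.

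Fix $P = (z_0,w_0) \in W_I$, write $F^n(P) = (z_n,w_n)$, and set $A := z_0 + \Delta_1(P)$, $B := w_0 + \Delta_2(P)$. Since the orbit lies in $I \times I \subset S$, the bound (\ref{eq: bounds on Delta in S}) gives $|\Delta_1(P)|,|\Delta_2(P)| < 1$. Since $P \in I \times I$ forces $|\Re z_0|,|\Re w_0| > C$, for $C$ sufficiently large one has $|\Re A|,|\Re B| \geq C - 1 > 0$, and in particular $k^* := \max\{|\Im A|/|\Re A|,\ |\Im B|/|\Re B|\} < 1$. Formulas (\ref{eq:F1})--(\ref{eq:F2}) give $z_{2n} = (-\delta)^n(z_0 + \Delta_1^n(P))$ and $z_{2n+1} = (-\delta)^{n+1}(w_0 + \Delta_2^{n+1}(P))$, so as $n \to \infty$ the ratio $|\Im z_n|/|\Re z_n|$ converges to $|\Im A|/|\Re A|$ or $|\Im B|/|\Re B|$ according to the parity of $n$, while $|\Re z_n|$ grows like $\delta^{n/2}$. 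Hence there exists $N_1$ such that $|\Im z_n|/|\Re z_n| \leq (1+k^*)/2 =: \tilde{k} < 1$ for all $n \geq N_1$.

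Next I would fix $N_0$ so large that (i) $k_{N_0} \geq \tilde{k}$; (ii) $W_n \subset I \times I$ for every $n \geq N_0$ --- this holds because in $W_n$ one has $\Re(z^2) \geq (1-k_n^2)R_n^2 \sim \frac{2(\delta/2)^n R_0^2}{n+2} \to \infty$, so eventually $> C^2$ (analogously for $w$ with $R_{n-1}$); and (iii) $N_0$ lies in the residue class mod $4$ that makes the quadrant of $W_{N_0}$ equal to the quadrant of $F^N(P)$ for the $N$ chosen next. All three can be achieved simultaneously since (iii) only constrains $N_0 \bmod 4$ while (i)--(ii) are preserved when $N_0$ is increased. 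Then I pick $N \geq N_1$ large enough that $|\Re z_N| > R_{N_0}$ and $|\Re w_N| = |\Re z_{N-1}| > R_{N_0 - 1}$, which is possible by the exponential lower bound on $|\Re z_n|$. With these choices, $F^N(P) \in W_{N_0}$.

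Since $W_{N_0}$ is open in $\C^2$, continuity of $F^N$ produces a neighborhood $U \subset \Omega$ of $P$ with $F^N(U) \subset W_{N_0}$. Forward invariance from Proposition~\ref{prop: invariance W} then yields $F^{N+m}(U) \subset W_{N_0+m} \subset I \times I$ for every $m \geq 0$. For $n = 0,1,\ldots,N-1$ we have $F^n(P) \in I \times I$, an open set, so by continuity of the finitely many maps $F^0,\ldots,F^{N-1}$ one may shrink $U$ further to ensure $F^n(U) \subset I \times I$ for these $n$ as well. The resulting $U$ lies in $W_I$, proving openness. The main obstacle in this plan is the cone-angle estimate, namely that $|\Im z_n|/|\Re z_n|$ stays bounded away from $1$ asymptotically; once one has this, the sets $W_n$ provide precisely the forward-invariant structure needed to propagate the condition ``orbit stays in $I \times I$'' from the single point $P$ to a full neighborhood.
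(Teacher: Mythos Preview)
Your approach is genuinely different from the paper's: the paper proves openness of $W_I$ by invoking the absorbing property (Proposition~\ref{prop:Absorbing WI}), taking a compact neighbourhood $\overline U\subset\Omega\cap(I\times I)$ of $P$, using absorption to get $F^n(\overline U)\subset W_I$ for all $n\ge N$, and then handling $F^0,\dots,F^{N-1}$ by continuity. You instead try to bypass the harmonic-function machinery entirely by showing directly that the orbit of $P$ enters some $W_{N_0}$.

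The gap is precisely the step you flag as the main obstacle: the inequality $k^*<1$ is asserted but not justified. From $F^n(P)\in I\times I$ one gets only $(\Re A_n)^2-(\Im A_n)^2>C^2\delta^{-2n}$ where $A_n=z_0+\Delta_1^n(P)$, and passing to the limit yields merely $|\Im A|\le|\Re A|$; this does \emph{not} follow ``in particular'' from $|\Re A|>0$. If equality $|\Im A|=|\Re A|$ held, then $|\Im z_{2n}|/|\Re z_{2n}|\to1$, and since $|A_n-A|=O(\delta^{-n})$ you would need $1-k_{N_0}\lesssim\delta^{-n}$, i.e.\ $N_0\gtrsim\delta^{n}$; but then $R_{N_0}\sim(\delta/2)^{N_0/2}$ grows super-exponentially in $n$ and overwhelms $|\Re z_{2n}|\sim\delta^{n}$, so no pair $(N,N_0)$ with $F^N(P)\in W_{N_0}$ can exist. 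The strict inequality $\phi(P)\in S$ that you need is established in the paper only \emph{after} openness of $W_I$ (it enters through the open-mapping/Hurwitz step in Proposition~\ref{prop:conjugacy}), so invoking it here would be circular. Without an independent proof that $k^*<1$ --- which appears to require exactly the absorbing argument you are trying to avoid --- the plan does not close.
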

\begin{proof}
Let $P \in W_I$. We want to find $V\subset W_I$ neighborhood of $P$. Since $W_I\subset \Omega\cap (I\times I)$ which is open there is a neighborhood $U$ of $P$ which is compactly contained in $\Omega\cap (I\times I)$.
Since $W_I$ is absorbing for $\Omega$ under  $F$ there exists $N>0$ such that 
\begin{equation}\label{eq:absorbed}
F^n(\ov{U})\subset W_I \text{ for all $n\geq N$}.
\end{equation}
As usual let us define $P_j:=F^j(P)$; by definition of $W_I$,  $P_j\subset I\times I$ for all $j\ge0$, which is an open set. Hence for each $j\geq0$ there is a neighborhood $U_j\subset I\times I$ of $P_j$. So up to making  the $U_j$ smaller, we can assume that $U_j\subset F^j(U)$.

Let 
$$ 
V:=\bigcap_{j=0}^N F^{-j}(U_j)\subset U. 
$$

The set $V$ is open since it is a finite intersection of open sets. We only need to check that $V\subset W_I$, or equivalently, that $F^j(V)\subset I\times I $ for all $j\geq0$. For $j\leq  N-1$, this is true by definition, since $F^j(V)\subset U_j\subset I\times I$. For $j\geq N$, this is true by  (\ref{eq:absorbed}).  Since $P\in V$ by construction, $V$ is a neighborhood of $P$ in $W_I$ as required.
\end{proof}

\subsection{Geometric structure of $\Omega$}\label{sect: limt sets}
 In this section we show  that $\Omega$ is the union of four disjoint Fatou components $\Omega^{ab}$, $a,b\in\{+,-\}$, each of which is biholomorphic to $\H\times\H$.

We first  show conjugacy of $F$ to its linear part on $\Omega$, and estimate the distance between the conjugacy and the identity map. 

 \begin{prop}[Conjugacy]\label{prop:conjugacy} $F$ is conjugate to the linear map $L(z,w)=(-\delta w,z)$ on $\Omega$. If  $P$ is such that $F^n(P)\in S$ for all $n\in\N$, then $\|(\phi - Id) (P)\|<\sqrt{2}$.  Finally,  $\phi(\Omega)\subset S$.
 \end{prop}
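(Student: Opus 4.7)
The plan is to take as conjugacy
\[
\phi(z,w) \;:=\; \bigl(z+\Delta_1(z,w),\ w+\Delta_2(z,w)\bigr),
\]
which is the formal limit $\lim_{n\to\infty} L^{-n}\circ F^n$: using $L^{-2n}=(-\delta)^{-n}\Id$ and $L^{-(2n+1)}(z,w)=(-\delta)^{-n}(w,-z/\delta)$ in \eqref{eq:F1}--\eqref{eq:F2}, both even and odd subsequences of $L^{-n}\circ F^n$ collapse to the displayed formula. Well-definedness on $\Omega$ follows from Proposition~\ref{prop:Absorbing WI}: every orbit eventually enters $W_I\subset I\times I$, where $|f(z_n)|=e^{-\Re(z_n^2)}<e^{-C^2}<1$, so the tail of each series is dominated geometrically by $\sum_j\delta^{-j}$, while only finitely many initial terms appear. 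The conjugacy $\phi\circ F=L\circ\phi$ follows from an index shift: setting $(z_1,w_1)=F(z_0,w_0)$ and using that the $k$-th iterate of $(z_1,w_1)$ equals $(z_{k+1},w_{k+1})$,
\[
\Delta_2(z_1,w_1)=\Delta_1(z_0,w_0),\qquad -\delta\Delta_2(z_0,w_0)=f(z_0)+\Delta_1(z_1,w_1),
\]
which combined with $z_1=f(z_0)-\delta w_0$ give $\phi(F(P))=(-\delta(w_0+\Delta_2),\,z_0+\Delta_1)=L(\phi(P))$. The distance bound is immediate: if $F^n(P)\in S$ for every $n\in\N$, Lemma~\ref{lem:control of f} gives $|f(z_n)|<1$ for every $n$, hence $|\Delta_i(P)|\le\sum_{j\ge1}\delta^{-j}=1/(\delta-1)<1$ (using $\delta>2$), and $\|(\phi-\Id)(P)\|_2=\sqrt{|\Delta_1|^2+|\Delta_2|^2}<\sqrt{2}$.

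For $\phi(\Omega)\subset S$ I would combine a soft closure argument with a minimum-principle upgrade. The map $L^{-1}(z,w)=(w,-z/\delta)$ preserves $S=\mathcal{S}\times\mathcal{S}$ (coordinate swap and real scaling each preserve $\mathcal{S}$), so $L^{-n}(S)\subset S$ for every $n$. For $P\in\Omega$ and $n$ large enough that $F^n(P)\in W_I\subset I\times I\subset S$, we have $\phi_n(P):=L^{-n}F^n(P)\in S$, and since $\phi_n(P)\to\phi(P)$ this gives $\phi(P)\in\overline{S}$. Hence on each component $\Omega^{ab}$ the function $u_i(P):=\Re(\phi_i(P)^2)$ is harmonic (the real part of the holomorphic $\phi_i^2$) and nonnegative. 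A direct estimate on $W$---for $P\in W_n$, $|\Re z_0|>R_n$, $|\Im z_0|<k_n|\Re z_0|$ and $|\Delta_1(P)|<1$ give $|\Re(z_0+\Delta_1)|-|\Im(z_0+\Delta_1)|>(1-k_n)R_n-2$, which is positive because $(1-k_n)R_n=(\delta/2)^{n/2}R_0/(n+2)$ is uniformly bounded below by a positive constant depending only on $\delta$ (enlarging $R_0$ if needed, compatibly with \eqref{eq:R0})---yields $\phi(W)\subset S$, so $u_i>0$ on $W^{ab}$. By the minimum principle, if $u_i$ vanished at any interior point of $\Omega^{ab}$ then $u_i\equiv 0$ there, contradicting positivity on $W^{ab}$; hence $u_i>0$ throughout $\Omega^{ab}$, so $\phi_i(P)\in\mathcal{S}$ and $\phi(\Omega)\subset S$. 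The main technical point is precisely this last step: pointwise perturbation bounds on $W_I$ alone do not suffice, since $W_I$ permits $\arg z$ arbitrarily close to $\pm\pi/4$, and the minimum principle is what lets the cleaner estimate on $W$ propagate to all of $\Omega$.
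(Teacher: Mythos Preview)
Your construction of $\phi$ as $\lim L^{-n}\circ F^n$, the index-shift verification of $\phi\circ F=L\circ\phi$, and the $\sqrt2$ bound all match the paper's argument. There is, however, one genuine omission: you never argue that $\phi$ is injective, so as written you have only produced a \emph{semi}conjugacy. The paper handles this on $W_I$ by observing that each $\phi_n=L^{-n}\circ F^n$ is an automorphism of $\C^2$, that the limit $\phi$ has generic rank~$2$ (otherwise $\|\phi-\Id\|$ could not stay bounded on the unbounded set $W_I$), and then invokes Hurwitz's theorem to conclude injectivity; the extension to $\Omega$ via $\phi=L^{-k}\circ\phi\circ F^k$ then preserves injectivity since $F$ and $L$ are automorphisms. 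You should add this step.

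Your route to $\phi(\Omega)\subset S$ is genuinely different from the paper's and is correct. The paper argues by contradiction on $W_I$: it uses $\|\phi-\Id\|<\sqrt2$ to trap $\phi(W_I)$ in a $\sqrt2$-neighborhood of $W_I$, and then observes that if some $Q\in\phi(W_I)\setminus S$ existed, forward invariance of $\phi(W_I)$ under $L$ would force $L^{2n}(Q)$ to stay in that neighborhood while drifting arbitrarily far from $S$, a contradiction; the passage from $W_I$ to $\Omega$ then uses $L^{-1}(S)=S$. Your approach---first getting $\phi(\Omega)\subset\overline S$ softly from $\phi_n(P)=L^{-n}F^n(P)\in S$ for large $n$, and then upgrading to the open set via the minimum principle applied to the pluriharmonic functions $u_i=\Re(\phi_i^2)\ge0$---is cleaner and avoids the dynamical contradiction. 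Note that for the minimum principle you only need $u_i>0$ at a \emph{single} point of each $\Omega^{ab}$, which is immediate for real points with large coordinates (where $\|\phi-\Id\|<\sqrt2$ keeps $\phi(P)$ well inside $S$); the more elaborate estimate $(1-k_n)R_n-2>0$ on all of $W$, and the attendant enlargement of $R_0$, are unnecessary.
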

 \begin{proof}
 We first show that $F$ is conjugate to $L$ on $W_I$.

 For $n\in\N$ let $\phi_n:\C^2\ra\C^2$ be the automorphisms defined as
  $$
\phi_n:=L^{-n}\circ F^n. 
 $$
If we  show that the  $\phi_n$ converge to a map $\phi:\C^2\ra\C^2$   uniformly on  $W_I$ we obtain that   $\phi$ satisfy the functional equation $\phi=L^{-1}\circ\phi\circ F$ and hence is a conjugacy between $F$ and $L$.
 
 Computing $L^{-n}$ and using the explicit expressions for the iterates of $F$ we obtain
 \begin{align}
 \phi_{2n}(z,w)&=\Big(z+\Delta_1^n(z,w),w+\Delta_2^n(z,w)\Big),\\
\phi_{2n+1}(z,w)&=\Big(z+\Delta_1^n(z,w),w+\Delta_2^{n+1}(z,w)\Big).
\end{align}
Both have the same  formal  limit 
 $$
 \phi (z,w)=\Big(z+\Delta_1(z,w),w+\Delta_2(z,w)\Big).
 $$
If   $P=(z,w) \in W_I$,  then $F^n(P)=(z_n,w_n)\subset I\times I\subset S$ for all $j$, hence, by (\ref{eq: bounds on Delta in S}),  we have that 
$\Delta(z,w)<1$;  in particular, $\Delta_1(z,w)$ and $\Delta_2(z,w)$ are convergent. Hence $\phi$
 is  a holomorphic map from  $W_I$ to $\phi(W_I)$  ($W_I$ is open by Proposition~\ref{prop:W_I open}).
 Moreover, 
\begin{equation}\label{eq: estimates on conj}
 \Big \|(\phi - \Id) (z,w)\Big\|=\Big \|\Big(\Delta_1(z,w),\Delta_2(z,w)\Big)\Big\|<\sqrt{2}\Delta(z,w)<\sqrt{2}.
\end{equation}
 It follows that $\phi$ is open  because $W_I$ is an unbounded set, hence if $\phi$ had rank 0 or 1, $\left \|(\phi - \Id)\right \|$ could not be bounded on $W_I$.  Hence the map $\phi$ is injective by Hurwitz Theorem  (see \cite{Krantz}, Exercise 3 on page 310) because the maps $\phi_n$ are injective  and their limit has rank 2. It follows that $\phi$ is a biholomorphism between $W_I$ and  $\phi(W_I)$.
  
To extend $\phi$ to all of $\Omega$ recall that $W_I$ is absorbing for $\Omega$. So if $P\in\Omega$, we have that  $F^k(P)\in W_I$ for some $k\in\N$, hence we can define   $\phi(P)= L^{-k}\circ \phi\circ F^k(P)$. Since $F$ is an automorphism, $\phi$ extends  as a biholomorphism  to $\Omega$. 
   
   It remains to show that $\phi(\Omega)\subseteq S$. By (\ref{eq: estimates on conj}) we have that  $\phi(W_I)$ is contained in a $\sqrt{2}$ neighborhood $U$ of $W_I$. 
Suppose by contradiction that there exists  $Q=(z,w)\in \phi(W_I)\setminus S$.  Since $W_I$ is forward invariant under $F$ and  $\phi$ is a conjugacy we have that  $\phi(W_I)$ is forward invariant under $L$. Up to considering $L(Q)$ if necessary,
and since $\theta$ is such that $r e^{i\theta}\notin S$,
 we can assume that $z=re^{i \theta}\notin \SS$.  By forward invariance  $L^{2n}(Q)=((-\delta^{n}) r e^{i\theta},(-\delta)^{n}w) \in \phi(W_I)$. 

Since $(-\delta)^{n} r $ tends to infinity, the distance of $L^{2n}(Q)$ from  the boundary of $S$ tends to infinity, hence so does the distance of  $L^{2n}(Q)$ from $W_I\subset S$, contradicting   $\phi(W_I)\subset U$. Hence $\phi(W_I)\subset S$. Since  $W_I $ is an absorbing set for $\Omega$ under $F$, $\phi\circ F=L\circ\phi$, and $\phi(W_I)$ is completely invariant under $L$,   we have that
\begin{equation}\label{eq:image of Omega under conjugacy}
\phi(\Omega)= \phi(\bigcup_{n\geq0} F^{-n}(W_I))=\bigcup_{n\geq0} L^{-n}(\phi(W_I))\subset \phi(W_I) \subset S.
\end{equation}
\end{proof}

We are now able to understand the geometric structure of $\Omega$.  
\begin{prop}\label{prop:geometry of Omega}
$\Omega$ consists of four distinct connected components, each of which is  biholomorphic to $\H\times\H$, and which form a cycle of period 4.
\end{prop}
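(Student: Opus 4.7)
The plan is to proceed in three parts.

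\emph{Part 1 (Distinctness and cycle).} By Proposition~\ref{prop:conjugacy}, the biholomorphism $\phi:\Omega\to\phi(\Omega)\subset S$ satisfies $\|\phi-\Id\|<\sqrt{2}$ on $W_I$. Choosing the constant $C>\sqrt{2}$ in the definition of $I$ ensures $\phi(W_I^{ab})\subset S^{ab}$ for every $a,b\in\{+,-\}$, since $W_I^{ab}$ lies in $\{|\Re z|,|\Re w|>C\}$. Because $\Omega^{ab}$ is connected and $\phi(\Omega)\subset S=\bigsqcup_{c,d}S^{cd}$, the image $\phi(\Omega^{ab})$ lies entirely in the single component $S^{ab}$. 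The four sectors $S^{ab}$ being pairwise disjoint and $\phi$ being injective, the $\Omega^{ab}$ are pairwise disjoint, hence four distinct Fatou components. The inclusion $F(\Omega^{ab})\subset\Omega^{(-b)a}$ from Proposition~\ref{prop:existence of FC}, combined with the fact that an automorphism of $\C^2$ sends each Fatou component bijectively onto a Fatou component, upgrades to equality, yielding the period-$4$ cycle $\Omega^{++}\to\Omega^{-+}\to\Omega^{--}\to\Omega^{+-}\to\Omega^{++}$. In particular $\Omega$ is completely $F$-invariant and $\phi(\Omega)$ is completely $L$-invariant.

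\emph{Part 2 (Target model).} Each component $\mathcal{S}\cap\{\pm\Re z>0\}$ of $\mathcal{S}\subset\C$ is a sector of opening $\pi/2$, and the squaring map $z\mapsto z^2$ sends it biholomorphically onto the right half-plane $\H$. Therefore $S^{ab}\cong\H\times\H$, and it suffices to show $\phi(\Omega^{ab})=S^{ab}$.

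\emph{Part 3 (Surjectivity).} A direct computation gives $L^2=-\delta\,\Id$, hence $L^4=\delta^2\,\Id$; for any $Q\in S^{ab}$ the orbit $L^{4N}(Q)=\delta^{2N}Q$ escapes to infinity along the ray $\R_{>0}\cdot Q$. Once we know $L^{4N}(Q)\in\phi(\Omega^{ab})$ for some $N$, the $L^4$-invariance of $\phi(\Omega^{ab})$ (which follows from $F^4(\Omega^{ab})=\Omega^{ab}$) gives $Q=L^{-4N}(L^{4N}Q)\in \phi(\Omega^{ab})$. The remaining content of the proposition is therefore reduced to showing that $\phi(\Omega^{ab})$ contains every point of $S^{ab}$ sufficiently deep in the sector.

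\emph{Main obstacle.} This last claim is the main technical step. I would establish it by solving the equation $\phi(P)=Q$ for a preimage $P\in W_I^{ab}$ via a Banach-style contraction argument on the fixed-point equation
\[
P=Q-\bigl(\Delta_1(P),\Delta_2(P)\bigr).
\]
The factor $e^{-z_n^2}$ in each term of the series defining $\Delta_i$ makes both $|\Delta_i|$ and its derivatives arbitrarily small deep inside $I\times I$, thanks to the growth estimates of Lemmas~\ref{lem:forever in S} and~\ref{lem:invariance of single sets}. The delicate point is not merely to produce a fixed point close to $Q$, but to verify that the resulting $P$ actually lies in $W_I$, i.e.\ that its forward $F$-orbit stays in $I\times I$; this requires interlacing the contraction estimate with the forward-invariance arguments developed in Subsection~\ref{sec:absorbing}.
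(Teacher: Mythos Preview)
Your Parts 1 and 2 align with the paper's argument (one small citation slip: the inclusion $F(W^{ab})\subset W^{(-b)a}$, and hence $F(\Omega^{ab})\cap\Omega^{(-b)a}\ne\emptyset$, comes from Proposition~\ref{prop: invariance W}, not from Proposition~\ref{prop:existence of FC}; the upgrade to equality is exactly Lemma~\ref{lem:Fatou components for automorphisms}).

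Part 3 is where you diverge from the paper, and where the proposal has a genuine gap: you yourself flag the ``main obstacle'' and do not resolve it. The difficulty is real. The map $P\mapsto Q-(\Delta_1(P),\Delta_2(P))$ is only defined on the set where the entire forward $F$-orbit of $P$ stays in $S$, so to run a Banach argument you must first exhibit a closed ball that is contained in that set \emph{and} mapped into itself. That essentially forces you to re-prove forward invariance together with derivative estimates on the $\Delta_i$, and the latter involve chain-rule products along the orbit with competing factors of order $\delta^n$ and $e^{-z_n^2}$. Doable, but laborious.

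The paper bypasses this with a short topological argument. From $\|\phi-\Id\|<\sqrt2$ on $W$ one gets $\|\phi-\Id\|\le\sqrt2$ on $\partial W$ by continuity, so $\phi(\partial W)$ is trapped in a fixed neighborhood $U$ of $\partial W$. An elementary connectedness lemma (if $A$ is open and connected, $A\cap B\ne\emptyset$, and $A\cap\partial B=\emptyset$, then $A\subset B$), applied with $B=\phi(W^{ab})$ and $A=W^{ab}\setminus\overline U$, gives $W\setminus\overline U\subset\phi(W)\subset\phi(\Omega)$. This already contains every set
\[
X_{\alpha,R}=\bigl\{(r_1e^{i\theta_1},r_2e^{i\theta_2}):r_i>R,\ |\theta_i|<\alpha\text{ or }|\theta_i-\pi|<\alpha\bigr\}
\]
for $\alpha<\tfrac{\pi}{4}$ and $R$ large. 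Your own $L$-invariance reduction (using $L^{-2}=-\delta^{-1}\Id$) then yields $X_{\alpha,0}\subset\phi(\Omega)$ for every such $\alpha$, and $\bigcup_{\alpha<\pi/4}X_{\alpha,0}=S$. No fixed-point analysis, no need to control where the preimage lands.
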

We recall the following simple  topological lemma. Here $\partial$ denotes the topological boundary.
\begin{lem}\label{lem:topological lemma}
Let $A,B\subset\C^n$ be open, $A$ connected. If $A\cap B\neq\emptyset$ and $\partial B\cap A=\emptyset$ then $A\subseteq B$.
\end{lem}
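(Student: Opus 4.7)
The plan is to exploit the connectedness of $A$ by writing it as a disjoint union of two relatively open subsets, one of which the hypotheses force to be empty. The first step is to observe that since $B$ is open, its topological boundary satisfies $\partial B = \overline{B}\setminus B$: a point of $B$ has $B$ itself as an open neighborhood disjoint from $B^c$, and hence cannot lie in $\partial B$. This yields the disjoint decomposition of the ambient space
\[
\C^n \;=\; B \;\sqcup\; \partial B \;\sqcup\; (\C^n\setminus \overline{B}).
\]

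Intersecting this decomposition with $A$ and using the hypothesis $\partial B \cap A = \emptyset$, I would obtain $A = (A\cap B)\sqcup(A\cap(\C^n\setminus\overline{B}))$. Both sets on the right are open in $A$: the first because $B$ is open by hypothesis, and the second because $\C^n \setminus \overline{B}$ is open as the complement of a closed set. Since $A$ is connected, one of these two relatively open pieces must be empty. The hypothesis $A\cap B\neq\emptyset$ rules out the first, so necessarily $A\cap(\C^n\setminus\overline{B})=\emptyset$, i.e., $A\subseteq\overline{B}$. Combining this inclusion with $\partial B\cap A=\emptyset$ gives $A\subseteq\overline{B}\setminus\partial B=B$, which is the desired conclusion.

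The argument is elementary point-set topology and I do not anticipate any real obstacle. The only mildly delicate point is the identity $\partial B=\overline{B}\setminus B$ for open $B$, on which the three-piece decomposition of $\C^n$ depends; I would spell it out explicitly at the outset to keep the disjointness unambiguous. Note that the proof uses no property of $\C^n$ beyond it being a topological space, so the lemma in fact holds verbatim in that broader generality, which explains why the authors flag it as ``a simple topological lemma.''
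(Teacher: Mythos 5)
Your proof is correct and follows essentially the same route as the paper's: both decompose $A$ as the disjoint union of the relatively open sets $A\cap B$ and $A\setminus\overline{B}$ (using $\partial B\cap A=\emptyset$) and invoke connectedness. You simply spell out the identity $\partial B=\overline{B}\setminus B$ for open $B$ and the final step $A\subseteq\overline{B}\setminus\partial B=B$, which the paper leaves implicit.
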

\begin{proof}
Since $A\cap \partial B=\emptyset$ we can write 
$$
A=(A\cap B)\cup (A\setminus \ov{B}).
$$
Both $A\cap B$ and $A\setminus \ov{B}$ are open and  $A\cap B\neq\emptyset$ by assumption, so  since $A$ is connected, $A\setminus \ov{B}=\emptyset$. 
\end{proof}
Recall also that if  a set $A$ is invariant under a map $F$, by continuity of the latter we have $F(\ov{A})\subset \ov{A}$. 
The following lemma is also known.

\begin{lem}\label{lem:Fatou components for automorphisms}
Let $\Omega_1,\Omega_2$ be two Fatou components for an automorphism  $F$ of  $\C^2$. Then if $F(\Omega_1)\cap \Omega_2\neq\emptyset$, $F(\Omega_1)=\Omega_2$.
 \end{lem}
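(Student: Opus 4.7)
The plan is to exploit that $F$, being an automorphism of $\C^2$, is a biholomorphism, so both $F$ and $F^{-1}$ are continuous and open maps that leave the Fatou set completely invariant. Once this is set up, the lemma follows in two symmetric halves: first showing $F(\Omega_1)\subset\Omega_2$, then $\Omega_2\subset F(\Omega_1)$.

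First I would check complete invariance of the Fatou set under $F$. If $p$ lies in the Fatou set, then on a neighborhood $U$ of $p$ every subsequence of $\{F^n\}$ has a subsubsequence converging uniformly on compact subsets (to a map into $\P^2$). Writing $F^{n_k}(w)=F^{n_k-1}(F(w))$ for $w\in V:=F^{-1}(U')$ with $U'$ a neighborhood of $F(p)$ contained in $F(U)$, and using that $F$ and $F^{-1}$ are continuous, one transfers normality at $p$ to normality at $F(p)$ and vice versa. Thus both $F$ and $F^{-1}$ map Fatou components into the Fatou set.

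Next, $F(\Omega_1)$ is open (since $F$ is an open map) and connected (continuous image of a connected set), and lies in the Fatou set by the previous step. Being a connected subset of the Fatou set, it is contained in a single Fatou component. Since $F(\Omega_1)\cap\Omega_2\neq\emptyset$ by hypothesis, this component must be $\Omega_2$, giving $F(\Omega_1)\subset\Omega_2$. Symmetrically, $F^{-1}(\Omega_2)$ is open, connected, and contained in the Fatou set, hence in a single Fatou component. From $F(\Omega_1)\subset\Omega_2$ we get $\Omega_1\subset F^{-1}(\Omega_2)$, so this component is $\Omega_1$, i.e. $F^{-1}(\Omega_2)\subset\Omega_1$. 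Applying $F$ yields $\Omega_2\subset F(\Omega_1)$, and combining the two inclusions gives $F(\Omega_1)=\Omega_2$.

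The only subtle point is the complete invariance of the Fatou set, which uses that $F^{-1}$ is holomorphic (true precisely because $F$ is an automorphism); for a general endomorphism the Fatou set is only forward invariant and the lemma would fail. Everything else is a clean topological/normal-family argument, with no need for any dynamical information about $\Omega_1$ or $\Omega_2$ beyond being Fatou components.
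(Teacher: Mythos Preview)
Your proof is correct. For the inclusion $F(\Omega_1)\subset\Omega_2$ both you and the paper argue the same way (connected open subset of the Fatou set lies in a single component). For the reverse inclusion the paper proceeds differently: it argues by contradiction that if $\Omega_2\setminus F(\Omega_1)\neq\emptyset$ then connectedness forces $\Omega_2\cap\partial(F(\Omega_1))=\Omega_2\cap F(\partial\Omega_1)\neq\emptyset$, and then invokes only \emph{forward} invariance of the Julia set together with the fact that $F$, being a homeomorphism, sends boundaries to boundaries. The paper even adds a remark that one cannot simply ``apply the same argument to $F^{-1}$'' because the Fatou components of $F$ and of $F^{-1}$ need not coincide. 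Your argument sidesteps that concern cleanly: you never treat $\Omega_1,\Omega_2$ as Fatou components of $F^{-1}$, you only use that $F^{-1}(\Omega_2)$ is a connected open subset of the Fatou set \emph{of $F$} (by complete invariance), hence contained in a single Fatou component of $F$, which must be $\Omega_1$. This is a legitimate and arguably more symmetric route; what it costs is an explicit appeal to backward invariance of the Fatou set (equivalently, holomorphy of $F^{-1}$), whereas the paper's boundary argument gets by with forward invariance alone plus the homeomorphism property.
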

 
\begin{proof}
We have that $F(\Omega_1)\subset \Omega_2$, indeed otherwise, $F(\Omega_1)$ would intersect the boundary of $\Omega_2$ which is contained in the forward Julia set, and this is impossible because the Fatou set is completely invariant.
On the other hand suppose for a contradiction that there is $P\in \Omega_2\setminus F(\Omega_1)$. Then since $F(\Omega_1)\cap \Omega_2\neq\emptyset$ and both $\Omega_1, F(\Omega_1)$ are connected there exists $Q\in\Omega_2\cap F(\partial\Omega_1)$, which  is impossible because $\partial \Omega_1$  is contained in the forward Julia set which is forward invariant. 
\end{proof}
Observe that we could not simply use the same argument applied to $F^{-1}$, since the Fatou components for $F$ and $F^{-1}$ are, in general, different sets.

\begin{proof}[Proof of Proposition~\ref{prop:geometry of Omega}]
We prove the claim by showing that $\Omega$ is biholomorphic to $S$. Since $S$ has four connected components $S^{ab}$ each of which is biholomorphic to  $\H\times\H$, the same holds for $\Omega$. Since by definition $\Omega$ had at most four connected components $\Omega^{ab}$ with $a,b\in\{+,-\}$, these are exactly the connected components of $\Omega$.  

Recall the definition of the set $W\subset S$ from Section~\ref{sec:invariant open set} and  recall that it is forward invariant. We first show that 
\begin{equation}\label{eq:image under conjugacy intersects}
\phi(W^{ab})\cap S^{ab}\neq\emptyset \text { for every $a,b\in\{+,-\}$;}
\end{equation} 
Indeed, by construction every  $W^{ab}$ contains points  $(z,w)$ with $|z|,|w|>M>\sqrt{2}$. For such points, by (\ref{eq: estimates on conj}) we have that $\|\phi(z,w)-(z,w)\|<\sqrt{2}$, but since $W^{ab}\subset S^{ab}$ by Proposition~\ref{prop:conjugacy},  the Euclidean distance of $(z,w)$ from $S^{\tilde{a}, \tilde{b}}$ is  larger than $M$ if $(a,b)\neq (\tilde{a}, \tilde{b})$ (indeed, if you take a point in one sector, no point  in a different sector is closer than  the origin).

We now show that $\phi(\Omega)=S$. By (\ref{eq:image of Omega under conjugacy})  we know  that $\phi(\Omega)\subset S$. 
Notice that $S$ can be written as
$$
S=\{
(r_1 e^{i\theta_1},r_2 e^{i\theta_2})\in\C^2: \text{$r_1,r_2>0$, and for each $i=1,2$ either  $|\theta_i|<\frac{\pi}{4}$   or $|\theta_i-\pi|<\frac{\pi}{4}$}
\}.
$$
Fix $\epsilon>\sqrt{2}$ and let  $U$ be an  $\epsilon$-neighborhood of $\partial W$. Since  $\|\phi-\Id\|<\sqrt{2}$ on ${W}$,  by continuity of $\phi$ we have that $\|\phi-\Id\|\leq \sqrt{2}$ on ${\partial W}$, hence $\phi(\partial W) \Subset U$. Let 
$a,b\in\{+,-\}$ and let  $B=\phi(W^{ab})$, $A=W^{ab}\setminus\ov{ U}$. By (\ref{eq:image under conjugacy intersects}) we have that $A\cap B\neq\emptyset$, 
and by the previous argument, $\partial B=\partial \phi(W^{ab})\subset \phi(\partial W^{ab})\Subset U$ does not intersect $A$.

 Hence, applying   Lemma~\ref{lem:topological lemma} to each possible choice of $a,b\in\{+,-\}$ we obtain  that 
$$
\phi(\Omega)\supset \phi(W)\supset (W\setminus U).$$
   Fix $\alpha<\frac{\pi}{4}$. By definition of $W$ there exists $R=R(\alpha)$ such that  $W\setminus U$ contains the set 
$$
W\setminus U\supset X_{\alpha,R}:=\{(r_1 e^{i\theta_1}, r_2 e^{i\theta_2}):\text{$r_1,r_2>R$ and for each $i=1,2$ either $|\theta_i|<\alpha$ or $|\theta_i-\pi|<\alpha$}\}.
$$ 
 Hence $\phi(\Omega)\supset  \phi(W)\supset (W\setminus U)\supset X_{\alpha,R}$. 
 By the explicit form of $L$,  $\bigcup_{j\geq0}L^{-j}X_{\alpha,R}=X_{\alpha,0} $.
   Hence   by backward invariance of $\phi(\Omega)$ under $L$ we have that 
    $$
 \phi(\Omega)\supset \bigcup_{j\geq0}L^{-j}X_{\alpha,R}=X_{\alpha,0} \text{ for every $\alpha<\frac{\pi}{4}$}.
 $$
 It follows that
   $$
\phi(\Omega)\supset\bigcup_{\alpha<\frac{\pi}{4}}    X_{\alpha,0} =S.
   $$ 

It remains to show that the Fatou components $\Omega^{ab}$ with $a,b\in\{+,-\}$ form a cycle of period four, more precisely, that 
\begin{equation}\label{eq:cycle on Omega}
F(\Omega^{ab})=\Omega^{(-b)a} \text{ for all $a,b\in\{+,-\}$.}
\end{equation}   
  By definition $\Omega^{ab}\supset W^{ab}$ and by Lemma~\ref{lem:forever in S}, $F(W^{ab})\cap W^{(-b)a}\neq\emptyset$. Hence  $F(\Omega^{ab})\cap \Omega^{(-b)a}\neq\emptyset$. By Lemma~\ref{lem:Fatou components for automorphisms},  $F(\Omega^{ab})= \Omega^{(-b)a}$.
  
\end{proof}
 
 \subsection{Limit sets on $\Omega^{ab}$}\label{sec:limit sets}
 Let $\H$ and $-\H$ denote the right and left half plane respectively.
In this section we show that 
 \begin{prop}[Limit set for $\Omega$]\label{prop:limit set for Omega}
 We have that
\begin{align*}
& h_1(\Omega^{ab})={\H} &\text{ and }&  & h_2(\Omega^{ab})=-{\H}  &\text{ if $a=b$ } \\
& h_1(\Omega^{ab})=-{\H} &\text{ and }&  & h_2(\Omega^{ab})={\H}  &\text{ if $a\neq b$}.
\end{align*}
 \end{prop}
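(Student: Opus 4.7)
The plan is to reduce the problem to a geometric computation via the conjugacy $\phi:\Omega\to S$ from Proposition~\ref{prop:conjugacy}. The key step is to show that $h_1$ coincides with the rational map $\phi_1/\phi_2$ on $\Omega$ (where $\phi=(\phi_1,\phi_2)$), and that $\phi$ identifies each Fatou component $\Omega^{ab}$ with the sector $S^{ab}$. Then the image $h_i(\Omega^{ab})$ becomes the image of $S^{ab}$ under an explicit ratio map, which is immediate to compute.

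First I would show that $\phi(\Omega^{ab})=S^{ab}$ for every $a,b\in\{+,-\}$. Since $\phi$ is a biholomorphism between $\Omega$ and $S$ by Proposition~\ref{prop:geometry of Omega}, and $S=\bigsqcup_{a,b}S^{ab}$ has four connected components, $\phi$ induces a permutation of the four sectors. By the estimate (\ref{eq: estimates on conj}), any point $(z,w)\in W^{ab}$ with $|z|,|w|$ large satisfies $\|\phi(z,w)-(z,w)\|<\sqrt{2}$, so its image remains in $S^{ab}$ (this is exactly (\ref{eq:image under conjugacy intersects})). Since $W^{ab}\subset\Omega^{ab}$ and $\phi(\Omega^{ab})$ is a connected component of $S$ meeting $S^{ab}$, we conclude $\phi(\Omega^{ab})=S^{ab}$.

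Next I would establish the identity $h_1(z,w)=\phi_1(z,w)/\phi_2(z,w)$ on $\Omega$. On $W_I$ this is just the definition: the formal limit $\phi=(z+\Delta_1,w+\Delta_2)$ is convergent there by (\ref{eq: bounds on Delta in S}), and (\ref{eq:limit h1}) gives $h_1=\phi_1/\phi_2$ directly. For an arbitrary $P\in\Omega$, since $W_I$ is absorbing (Proposition~\ref{prop:Absorbing WI}) and $L^2=-\delta\,\Id$, the conjugacy identity yields
\[
F^{2n}(P)=\phi^{-1}\bigl((-\delta)^n\phi(P)\bigr),
\]
and for all sufficiently large $n$ the point $(-\delta)^n\phi(P)\in\phi(W_I)$, where the bound $\|\phi^{-1}-\Id\|<\sqrt{2}$ applies. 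Taking ratios of coordinates,
\[
\frac{z_{2n}}{w_{2n}}=\frac{(-\delta)^n\phi_1(P)+O(1)}{(-\delta)^n\phi_2(P)+O(1)}\xrightarrow[n\to\infty]{}\frac{\phi_1(P)}{\phi_2(P)},
\]
so the identity extends to all of $\Omega$.

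Finally I would compute the images. Set $\mathcal{S}^{+}:=\mathcal{S}\cap\{\Re z>0\}$ and $\mathcal{S}^{-}:=\mathcal{S}\cap\{\Re z<0\}$; these are the open cones of opening $\pi/2$ centered on the positive and negative real axes respectively. For $(z',w')\in S^{++}\cup S^{--}$ one has $z'/w'=(\pm z')/(\pm w')$ with $\pm z',\pm w'\in\mathcal{S}^{+}$, so $|\arg(z'/w')|<\pi/2$ and therefore $z'/w'\in\H$; conversely any element of $\H$ arises this way. For $(z',w')\in S^{+-}\cup S^{-+}$ the same argument with a sign flip gives $z'/w'\in-\H$. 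Combined with $\phi(\Omega^{ab})=S^{ab}$ and $h_1=\phi_1/\phi_2$, this yields $h_1(\Omega^{aa})=\H$ and $h_1(\Omega^{a(-a)})=-\H$. The claims for $h_2$ then follow immediately from $h_2=-\delta/h_1$ in (\ref{eq:limit h2}), since $1/\H=\H$ and multiplication by $-\delta<0$ sends $\H$ to $-\H$.

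The main technical obstacle is extending $h_1=\phi_1/\phi_2$ off $W_I$, because a priori the series defining $\Delta_1,\Delta_2$ need not converge outside forward-invariant regions in $S$; the argument above sidesteps this by pulling back the limit through the functional equation $\phi\circ F=L\circ\phi$ once the absorbing property of $W_I$ is invoked.
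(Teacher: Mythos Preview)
Your argument is correct and takes a genuinely different route from the paper's.

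The paper proves the two inclusions separately. For the upper bound it tracks arguments along orbits in $W_I$ to get $h_1(W_I^{aa})\subset\overline{\H}$ (then uses openness of $W_I$ to drop the closure), and combines with the absorbing property and the $F^2$-invariance of $W_I^{++}\cup W_I^{--}$. For the lower bound it develops a two-variable Rouch\'e theorem (Theorem~\ref{thm:Rouche 2D}) and applies it on small polydisks in $W$, comparing $h_1$ to $G(z,w)=z/w$ via the expansion~(\ref{eq: distance h0 h1}). Your approach bypasses both ingredients: once you establish the global identity $h_1=\phi_1/\phi_2$ on $\Omega$ (your key step, and a genuine simplification---the paper never writes this down although all the pieces are there), the problem collapses to computing the image of $S^{ab}$ under $(z,w)\mapsto z/w$, which is elementary. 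Your extension of $h_1=\phi_1/\phi_2$ from $W_I$ to $\Omega$ via the functional equation and the bound $\|\phi^{-1}-\Id\|<\sqrt{2}$ on $\phi(W_I)$ is sound; note that $\phi_2(P)\neq 0$ since $\phi(\Omega)=S$ avoids $\{\Re w=0\}$, so the ratio is well defined. The trade-off is that your proof leans entirely on Propositions~\ref{prop:conjugacy} and~\ref{prop:geometry of Omega}, whereas the paper's argument for the lower bound is more self-contained and would survive in settings where a global conjugacy to a linear map is unavailable.
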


 Let $W$ be as defined in Section~\ref{sec:invariant open set} and $W_I$ as defined in Section~\ref{sec:absorbing}. Since both are forward invariant and contained in $S$ we have that, for any $a,b\in\{+,-\}$,  $F(W^{ab})\subset W^{(-b)a}$ and $F(W_I^{ab})\subset W_I^{(-b)a}$. Compare with  Lemma~\ref{lem:forever in S} and Corollary~\ref{cor:points in sets contained in S}.

We first study the  image of $W_I^{ab}$  under $h_1,h_2$.
  \begin{lemma}
 \label{lem:limit set for WI}
\begin{align*}
&h_1(W_I^{ab})\subset {\H} &\text{ and }&  &h_2(W_I^{ab})\subset {-\H}  &\text{ if $a=b$ } \\
 &h_1(W_I^{ab})\subset {-\H} &\text{ and }& &h_2(W_I^{ab})\subset {\H}  &\text{ if $a\neq b$ } 
\end{align*}
 \end{lemma}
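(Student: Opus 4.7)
The plan is to exploit the conjugacy $\phi$ from Proposition~\ref{prop:conjugacy}. Writing $Z := z_0 + \Delta_1(z_0,w_0)$ and $W := w_0 + \Delta_2(z_0,w_0)$, we have $\phi(z_0,w_0) = (Z,W)$ and $h_1(z_0,w_0) = Z/W$. The key input is that $\phi(\Omega) \subset S$; since $W_I \subset \Omega$, any $P = (z_0,w_0) \in W_I$ satisfies $(Z,W) \in S$, so both $Z$ and $W$ lie in the open sector $\mathcal{S}$.

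Next I would pin down the signs of $\Re Z$ and $\Re W$ for $P \in W_I^{ab}$. Since $z_0 \in I$ we have $|\Re z_0| > C \geq 1$, while the bound (\ref{eq: bounds on Delta in S}) applies on $W_I$ (which is forward invariant and contained in $S$), giving $|\Re \Delta_1| \leq |\Delta_1| < 1$. Hence $\Re Z$ has the same sign $a$ as $\Re z_0$, and similarly $\Re W$ has sign $b$. Combined with $Z, W \in \mathcal{S}$, this places $\arg Z$ in $(-\pi/4,\pi/4)$ when $a=+$ and in $(3\pi/4, 5\pi/4)$ when $a=-$, and analogously for $W$ with $b$.

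It then remains to compute $\arg h_1 = \arg Z - \arg W$ modulo $2\pi$. A short case analysis shows: when $a=b$ the difference lies in $(-\pi/2, \pi/2)$, giving $\Re h_1 > 0$ and hence $h_1 \in \H$; when $a \neq b$ the difference lies in a translate of $(-\pi/2,\pi/2)$ by $\pm \pi$, giving $\Re h_1 < 0$ and $h_1 \in -\H$. The claim for $h_2$ follows from $h_2 = -\delta/h_1$ with $\delta > 0$: $\Re h_2 = -\delta \Re h_1 / |h_1|^2$ has the opposite sign to $\Re h_1$, so $h_2$ always lies in the opposite half-plane. I expect no serious obstacle here: once the inclusion $\phi(W_I) \subset S$ from Proposition~\ref{prop:conjugacy} is invoked, the rest is essentially trigonometric bookkeeping, and the subtlety of whether $Z, W$ lie in the \emph{open} sector (which is what forces strict inequalities and hence the open half-planes $\pm \H$) is handled entirely by that proposition.
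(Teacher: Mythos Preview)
Your argument is correct and takes a genuinely different route from the paper's. The paper argues dynamically: it uses that $F^2$ preserves $W_I^{++}\cup W_I^{--}$ (respectively $W_I^{+-}\cup W_I^{-+}$), so for $(z_0,w_0)$ in such a set every even iterate $(z_{2n},w_{2n})$ has $\arg(z_{2n}/w_{2n})\in(-\pi/2,\pi/2)$ (respectively its complement), and passing to the limit yields $h_1\in\overline{\H}$; the closure is then removed by invoking openness of $W_I$ (Proposition~\ref{prop:W_I open}) together with the fact that $h_1$ has maximal rank. Your approach instead reads $h_1=Z/W$ directly off the conjugacy $\phi$ of Proposition~\ref{prop:conjugacy}, uses the inclusion $\phi(\Omega)\subset S$ from that proposition to place $Z,W$ in the \emph{open} sector $\mathcal{S}$, and pins down the sign of $\Re Z,\Re W$ via $|\Re z_0|>C\ge1>|\Delta_1|$ on $W_I$. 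This is cleaner in that it lands in the open half-plane immediately, with no limiting step and no appeal to the open mapping property; the price is that it leans on the stronger statement $\phi(\Omega)\subset S$, whereas the paper's proof of the lemma is more self-contained. Since Proposition~\ref{prop:conjugacy} precedes this lemma and does not depend on it, there is no circularity.
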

  \begin{proof}
Recall that $h_1(z_0,w_0)=\lim_{n\ra\infty}\frac{z_{2n}}{w_{2n}}$. Let $I_+:=\left(-\frac{\pi}{4},\frac{\pi}{4}\right)$ and $I_-:=\left(\frac{3}{4}\pi,\frac{5}{4}\pi\right)$.

For $a,b\in\{+,-\}$ and  $(z,w)\in W_I^{ab}$, then $\arg (z)\in I_a$ and $\arg( w)\in I_b$. Hence  $\arg\left( \frac{z}{w}\right)\in \left(-\frac{\pi}{2},\frac{\pi}{2} \right)$ if $a=b$, and $\arg\left( \frac{z}{w}\right)\in \left( \frac{\pi}{2},\frac{3}{2}\pi\right) $ if $a\neq b$. 
Since $F^2(W_I^{++}\cup W_I^{--})\subset W_I^{++}\cup W_I^{--}$, If $(z,w)\in W_I^{++}\cup W_I^{--}$  then all of its even iterates $(z_{2n}, w_{2n})\in W_I^{++}\cup W_I^{--}$, hence by taking the limit  $h_1 (W_I^{++}), h_1(W_I^{--})\subset \ov{\H}$. Similarly if  $(z,w)\in W_I^{+-}\cup W_I^{-+}$  then all of its even iterates $(z_{2n}, w_{2n})\in W_I^{+-}\cup W_I^{-+}$, hence  $h_1 (W_I^{+-}), h_1(W_I^{-+})\subset \ov{-\H}$. The analogous results for $h_2$ hold by observing that $h_2=\frac{-\delta}{h_1}$.  Since $W_I$ is open by Proposition~\ref{prop:W_I open},  its image under a   holomorphic  map of maximal rank is open, hence we can replace $\overline{\H},\overline{-\H}$ by $\H,-\H$.
  \end{proof}

 \begin{lem}
 \label{lem:limit set for W}
\begin{align*}
&\H\subset h_1(W^{ab})  &\text{ and }&  &-\H\subset h_2(W^{ab})  &\text{ if $a=b$ } \\
 -&\H\subset h_1(W^{ab}) &\text{ and }& &\H\subset h_2(W^{ab}) &\text{ if $a\neq b$}.
\end{align*}
 \end{lem}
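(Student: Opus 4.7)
The plan is to prove $\H\subset h_1(W^{++})$ directly by constructing, for each $\zeta\in\H$, a point $(z_0,w_0)\in W^{++}$ with $h_1(z_0,w_0)=\zeta$. The other sign patterns for $h_1$ are analogous, and the $h_2$ statements follow from the identity $h_2=-\delta/h_1$, which swaps $\H$ and $-\H$.

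Given $\zeta\in\H$, write $\psi=\arg\zeta\in(-\pi/2,\pi/2)$ and set $\alpha=\psi/2$, $\beta=-\psi/2$, both in $(-\pi/4,\pi/4)$, so that $\alpha-\beta=\psi$. For $t>0$ to be chosen large, let $w_0:=te^{i\beta}$ and search for $z_0$ near the target value $\zeta w_0=|\zeta|te^{i\alpha}$. By (\ref{eq:limit h1}), the equation $h_1(z,w_0)=\zeta$ is equivalent to the fixed-point equation $z=T(z)$ where
$$T(z):=\zeta w_0+\zeta\,\Delta_2(z,w_0)-\Delta_1(z,w_0).$$

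Fix $r>(|\zeta|+1)/(\delta-1)$ and let $D:=\overline{D}(\zeta w_0,r)\subset\C$. The key geometric step is to check that, for suitable $n$ and $t$, one has $D\times\{w_0\}\subset W_n^{++}\subset W^{++}$. Concretely, pick $n\equiv 0\pmod 4$ large enough that $k_n>\max(|\tan\alpha|,|\tan\beta|)$, then pick $t$ large enough that $\Re z>R_n$ for every $z\in D$ and $\Re w_0>R_{n-1}$; the inclusion then follows from elementary estimates using the definitions of $R_n$ and $k_n$. Since $W$ is forward invariant in $S$ by Proposition~\ref{prop: invariance W}, the bound (\ref{eq: bounds on Delta in S}) gives $|\Delta_i(z,w_0)|\le 1/(\delta-1)$ for every $z\in D$, so $T$ is holomorphic on $D$ and
$$|T(z)-\zeta w_0|\le (|\zeta|+1)/(\delta-1)<r,$$
forcing $T(D)\subset D^\circ$. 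Brouwer's fixed-point theorem (equivalently, the Schwarz lemma for a holomorphic self-map of a disk) yields $z_0\in D^\circ$ with $T(z_0)=z_0$, and then $(z_0,w_0)\in W^{++}$ satisfies $h_1(z_0,w_0)=\zeta$.

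For the remaining sign patterns one picks $\alpha,\beta$ in the appropriate pair of subsectors of $\mathcal{S}$ so that both $w_0$ and $\zeta w_0$ lie in the correct half-planes for $(a,b)$, and the fixed-point argument is otherwise unchanged. The step I expect to be most delicate is the geometric verification $D\times\{w_0\}\subset W_n^{++}$: one must simultaneously arrange the four conditions $R_n+r<\Re(\zeta w_0)$, $R_{n-1}<\Re w_0$, $k_n>\max(|\tan\alpha|,|\tan\beta|)$, and $n\equiv 0\pmod 4$. Since $k_n\to 1^-$ as $n\to\infty$ and $R_n$ grows geometrically, the natural order is to first pick $n$ in the right residue class to accommodate the slope condition (always possible because $|\alpha|,|\beta|<\pi/4$), and then pick $t$ large enough (depending on $n$, $|\zeta|$, and $r$) to accommodate the radius conditions; no other step requires serious work.
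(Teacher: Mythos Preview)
Your argument is correct and differs from the paper's in its execution. Both proofs exploit that on $W$ the perturbations $\Delta_1,\Delta_2$ are uniformly small, so $h_1(z,w)$ is close to the model map $G(z,w)=z/w$; but the paper packages this as a two-variable Rouch\'e comparison, while you reduce to a one-variable fixed-point problem on a slice. Concretely, the paper proves a separate two-dimensional Rouch\'e theorem (Theorem~\ref{thm:Rouche 2D}) with target $\hat\C$ and the spherical metric, derives the estimate (\ref{eq: distance h0 h1}) for $\big|\frac{z_{2n}}{w_{2n}}-\frac{z_0}{w_0}\big|$, and then applies Rouch\'e on a small polydisk around a point $Q\in G^{-1}(c)\cap W^{++}$ to conclude $c\in h_1(W^{++})$. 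You instead freeze $w=w_0$ on a well-chosen ray, rewrite $h_1(z,w_0)=\zeta$ as $z=T(z)$, and use only the crude bound $|\Delta_i|<1/(\delta-1)$ together with Brouwer (equivalently, one-variable Rouch\'e applied to $z-T(z)$ versus $z-\zeta w_0$). Your route is more elementary and explicit---no auxiliary two-variable Rouch\'e lemma and no spherical-distance bookkeeping---while the paper's version isolates a reusable tool and does not require the specific angle-splitting $\alpha=\psi/2$, $\beta=-\psi/2$.
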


  Before proving Lemma~\ref{lem:limit set for W} let us see how Lemmas~\ref{lem:limit set for WI} and \ref{lem:limit set for W} imply Proposition~\ref{prop:limit set for Omega}.
  
  \begin{proof}[Proof of Proposition~\ref{prop:limit set for Omega}]
  We prove the claims for $h_1$; for $h_2=\frac{-\delta}{h_1}$, it follows by symmetry. 
  
Clearly $h_1(\Omega^{ab})\supset h_1(W^{ab})$  for any $a,b\in\{+,-\}$ since $\Omega^{ab}\supset W^{ab}$. So  in view of Lemma~\ref{lem:limit set for W},   $h_1(\Omega^{ab})\supset -\H$ or  $h_1(\Omega^{ab})\supset \H$  depending on whether $a=b$.
 
We now  consider limit sets for $\Omega^{++}$ and $\Omega^{--}$; the other cases are analogous.
By (\ref{eq:cycle on Omega}),  
$$
F^{2}(\Omega^{++})\subset \Omega^{--} \text{ and } F^{2}(\Omega^{--})\subset \Omega^{++}.
$$
It follows that for any $n>0$, 
 $$
F^{2n}(\Omega^{++}\cup\Omega^{--})\subset\Omega^{++}\cup \Omega^{--}.
$$
In view of this, and since $W_I$ is absorbing for $\Omega$,  we have that  for any $P\in(\Omega^{++}\cup\Omega^{--} )$
 $$
F^{2n}(P)\subset W_I\cap(\Omega^{++}\cup \Omega^{--})=W_I^{++}\cup W_I^{--} \text{ for any $n$ large enough}. 
$$
Hence $h_1(P)\in h_1(W_I^{++}\cup W_I^{--} )\subset{\H}$ for every $P\in(\Omega^{++}\cup\Omega^{--} )$, hence $h_1(\Omega^{++}\cup\Omega^{--} )\subset{\H}$. 
It follows that  $h_1(\Omega^{++})=h_1(\Omega^{--})= \H$.

\end{proof}

We devote the rest of this section to proving Lemma~\ref{lem:limit set for W}.  We first give a version of Rouch\'e's Theorem  which relies on one of the many versions of Rouch\'e's Theorem existing  in one variable (compare with Theorem 3.4 in \cite{BSZ}; we will use it with the spherical instead  of the Euclidean metric).  
 This is certainly known to experts in the field but we are not aware of a reference. In this section  $\partial $ denotes the topological boundary, and  $\dist_{\operatorname{spher}}$ denotes the spherical  distance.

\begin{thm}[Rouch\'e' s Theorem in $\C^2$]\label{thm:Rouche 2D}
Let $B\subset \C^2$ be a polydisk, $F,G$ be holomorphic maps defined in a neighborhood of $\ov{B}$ which take values in $\hat{\C}$.  Let $c\in G(B)$, let $\epsilon= \dist_{\operatorname{spher}}(c, G(\partial B))>0$ and assume 
 $$
 \dist_{\operatorname{spher}}( F,G)<\epsilon \text{ on $\partial B$}.   
 $$
 Then $c\in F(B)$. 
\end{thm}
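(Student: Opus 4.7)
The plan is to reduce the statement to a one-variable Rouché theorem in the spherical metric (such as Theorem 3.4 in \cite{BSZ}) by slicing $B$ with a complex line through a point where $G$ takes the value $c$.

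First, pick $P\in B$ with $G(P)=c$, which exists since $c\in G(B)$. Choose a direction $v\in\C^2\setminus\{0\}$ and consider the complex line $\ell:=\{P+\zeta v:\zeta\in\C\}$. Let $D$ be the connected component of $\ell\cap B$ containing $P$. Since $B=\D_1\times \D_2$ is a polydisk, the condition $P+\zeta v\in B$ translates in $\zeta$-coordinates into the intersection of at most two disk inequalities (one per factor, counting degenerations when some $v_i=0$). Hence $D$ is a bounded convex (in particular simply connected) planar domain, and $\partial D\subset\partial B$; for a generic direction $v$ the line $\ell$ meets the distinguished boundary $\partial \D_1\times\partial \D_2$ trivially, so $\partial D$ is a piecewise smooth Jordan curve.

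Now restrict to $D$: $\widetilde F:=F|_{\overline D}$ and $\widetilde G:=G|_{\overline D}$ are meromorphic on a neighborhood of $\overline D$, and $c\in\widetilde G(D)$ by construction. Since $\partial D\subset\partial B$, we have $G(\partial D)\subset G(\partial B)$, and therefore
$$
\dist_{\operatorname{spher}}(c,\widetilde G(\partial D))\geq\dist_{\operatorname{spher}}(c,G(\partial B))=\epsilon,
$$
while the hypothesis gives $\dist_{\operatorname{spher}}(\widetilde F,\widetilde G)<\epsilon$ on $\partial D$. Applying the one-variable Rouché theorem in the spherical metric to $\widetilde F,\widetilde G$ on $D$ produces a point $Q\in D$ with $\widetilde F(Q)=c$, and since $D\subset B$ this yields $c\in F(B)$, as required.

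The main obstacle is really just confirming that an appropriate one-dimensional Rouché statement is available for $\hat\C$-valued holomorphic maps on arbitrary simply connected Jordan domains with the spherical distance; this is exactly the version recalled in \cite{BSZ}, so the higher-dimensional extension is essentially bookkeeping once the slicing is set up. A minor point is to justify that one may choose the complex direction $v$ (or equivalently perturb it slightly) so that $\ell$ is not tangent to $\partial B$; since the set of tangential directions has measure zero, any such generic choice suffices.
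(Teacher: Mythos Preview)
Your argument is correct and is essentially the same as the paper's: pick a preimage $P\in G^{-1}(c)\cap B$, slice $B$ by a complex disk through $P$ whose boundary lies in $\partial B$, and apply the one-variable spherical Rouch\'e theorem (Theorem~3.4 in \cite{BSZ}) to the restrictions. The only cosmetic difference is that the paper slices with a \emph{horizontal} (coordinate) disk, so $D$ is automatically a round disk with $\partial D\subset\partial B$ and no discussion of generic directions or the convexity of $\ell\cap B$ is needed.
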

Notice that the assumptions imply that $F,G$ have generic rank 1: They cannot have rank 2 because the target is $\hat{\C}$, and $G$ cannot be constant otherwise there could not be $c\in G(B)$ with positive distance from $G(\partial B)$. One can check that $F$ cannot be constant either.
\begin{proof}
Let $D$ be a horizontal disk passing through a point $P_c\in G^{-1}(c)\cap B$, such that $\partial D\subset \partial B$. Let $g:=G|_{D}$, $f:=F_{D}$. They are holomorphic in a slightly larger horizontal disk. 
Notice that $\dist_{\operatorname{spher}}(g,f)<\epsilon$ on  $\partial D$, and that $\dist_{\operatorname{spher}}(G(\partial D),c)\geq\epsilon$ because $\partial D\subset \partial B$. 
By Rouch\'e's Theorem in one variable,  $c\in f(D)\subset F(B)$ as required.
\end{proof}
\begin{rem} Unless $P_c$ is an isolated point in $G^{-1}(c)$ we obtain a curve of points in $F^{-1}(c)$. Indeed, the proof gives a point in $F^{-1}(c)$ for any Euclidean disk passing through points in $G^{-1}(c)$, for example, a family of disks passing through $P_c$ along different complex directions.  The points obtained for $F^{-1}(c)$ are  distinct unless they always coincide with $P_c$. On the other hand,  if $P_c$ is an isolated point in $G^{-1}(c)$ then $P_c\in F^{-1}(c)$ is also isolated in $F^{-1}(c)$. Indeed otherwise we could reverse the role of $F$ and $G$  and obtain one point in $G^{-1}(c)$ for any Euclidean disk passing through any point in $F^{-1}(c)$ and obtain a curve of points in $G^{-1}(c)$. The proof as it is works when $B$ is any $\C$-convex set instead of a polydisk, and can certainly be generalized further.
\end{rem}

\begin{proof}[Proof of  Lemma~\ref{lem:limit set for W}]
We  show $\H\subset h_1(W_I^{++})$. The other cases are analogous. Recall that orbits of points in $W$ are contained in $S$, hence (\ref{eq: bounds on Delta in S}) holds. Since
$$
\frac{z_{2n}}{w_{2n}}=\frac{z_0+\Delta_1^n(z_0,w_0)}{w_0+\Delta_2^n(z_0,w_0)},
$$
dividing the numerator and the denominator by $w_0$ and using   $\frac{1}{1+x}=\sum_{j=0}^\infty( -x)^j$ for $|x|<1$ we have that 
\begin{equation}\label{eq: distance h0 h1}
\frac{z_{2n}}{w_{2n}}-\frac{z_{0}}{w_{0}}= \frac{\Delta_1^n(z_0,w_0)}{w_0} +\left( \frac{z_0}{w_0}+ \frac{\Delta_1^n(z_0,w_0)}{w_0}\right)\sum_{j=1}^\infty\left( \frac{-\Delta_2^n(z_0,w_0)}{w_0} \right)^{j} \text{ $\forall n\geq0$.}
\end{equation}
This expression makes sense for $|x|=\left|\frac{-\Delta_2^n(z_0,w_0)}{w_0}     \right|<1$, hence, in view of (\ref{eq: bounds on Delta in S}), for  $|w_0|>1$. Recall also that $|\sum_{j=1}^\infty x^{j}|=\frac{|x|}{1-x}\leq 2|x|$ if $|x|<\frac{1}{2}$.
Let $K\subset\hat\C$  be a compact set and suppose that $\frac{z_0}{w_0}$ takes values in $K$.  By  (\ref{eq: distance h0 h1}) and using (\ref{eq: bounds on Delta in S}),  for   any  $\epsilon>0$    there exists $M=M(K,\epsilon)$ such that 
\begin{equation}
\bigg|\frac{z_{2n}}{w_{2n}}-\frac{z_{0}}{w_{0}}\bigg|<\epsilon \text{ for $|w_0|>M$ and $\frac{z_0}{w_0}\in K$.}
\end{equation}
Consider the function  $G(z,w):=\frac{z}{w}$
Observe that
 $$
 G^{-1}(re^{i\theta})=\{(r_1e^{i\theta_1},r_2 e^{i\theta_2})\in\C^2: \frac{r_1}{r_2}=r, \theta=\theta_1-\theta_2 \}.
 $$ 
 Let $c\in \H$.
 By the shape of $W$ we have  that $G(W^{++})=\H$, that  $\epsilon:=\frac{1}{2}\dist_{\operatorname{spher}}(c, G(\partial W ))>0$, and that we can choose $Q=(z_0,w_0)\in W^{++} \in G^{-1}(c)$ such that $|w_0|$ is arbitrarily large.  
By taking a limit in $n$ in equation (\ref{eq: distance h0 h1}) and  on a sufficiently small polydisk centered at $Q$ we can ensure  that  $\dist_{\operatorname{spher}}(h_1,G)<\epsilon$, hence the claim follows by Rouch\'e's Theorem.
\end{proof}

The main Theorem is a direct consequence of Propositions~\ref{prop:existence of FC}, \ref{prop:conjugacy},     \ref{prop:geometry of Omega}, \ref{prop:limit set for Omega}.

\section{Appendix: Proof of Proposition \ref{prop: harmonic bounded outside analytic set}}
We split the proof of Proposition \ref{prop: harmonic bounded outside analytic set} over several lemmas.

\begin{defn}
 Let $E\subset\C^n$. A vector $v\in\C^n$ is called tangent to $E$ at a point $P\in\overline E$ if there exist a sequence of points $P_J \in E$ and real numbers $t_j > 0$ such that $P_i \to P$ and $t_j(P_j - P) \to v$ as $j\to\infty$.
The set of all such tangent vectors is the \emph{tangent
cone} to $E$ at $P$.
\end{defn}

The tangent cone is indeed a cone in $\C^n=T_P\C^n$. If the set $E$ is a $\mathcal{C}^1$-smooth manifold, the tangent cone coincides with the tangent space.

For complex analytic sets of dimension one, the following is a well known fact. For a proof, see \cite{Chirka1989}, Corollary on page 80.
\begin{lemma}\label{lem1}
    Let $H\subset\C^n$ be an analytic set of dimension one. For all $x\in H$ the tangent cone of $H$ at $x$ consists of a finite union of complex lines (whose number is not greater than the number of irreducible components of $H$ at $x$).
\end{lemma}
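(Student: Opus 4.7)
The plan is to reduce to the case of an irreducible one-dimensional analytic germ and then compute its tangent cone explicitly via a Puiseux-type parametrization. By the local structure theorem for analytic sets, near $x$ one can decompose $H = H_1 \cup \cdots \cup H_k$ into a finite union of irreducible one-dimensional analytic germs at $x$. Any sequence $P_j \in H$ with $P_j \to x$ admits a subsequence lying in a single $H_i$, so the tangent cone of $H$ at $x$ equals $\bigcup_{i=1}^k T_x H_i$. It therefore suffices to prove that for each irreducible germ $H_i$, the set $T_x H_i$ is a single complex line through the origin.

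Fix such an $H_i$. The standard local parametrization theorem for one-dimensional irreducible analytic germs provides a holomorphic map $\phi_i \colon (\D, 0) \to (H_i, x)$ whose image is a neighborhood of $x$ in $H_i$ and which is injective away from $0$. Writing the expansion
\[
\phi_i(s) = x + a_m s^m + a_{m+1} s^{m+1} + \cdots
\]
with $m \geq 1$ and $a_m \in \C^n \setminus \{0\}$, I claim that $T_x H_i = L_i := \C \cdot a_m$. For the inclusion $T_x H_i \subseteq L_i$, given $P_j = \phi_i(s_j) \in H_i$ with $s_j \to 0$ and real $t_j > 0$ such that $t_j(P_j - x) \to v$, the identity
\[
t_j(P_j - x) = t_j|s_j|^m \bigl( a_m (s_j/|s_j|)^m + O(|s_j|) \bigr)
\]
combined with a subsequence argument (passing so that $(s_j/|s_j|)^m \to e^{i\theta}$ and $t_j|s_j|^m \to \rho \geq 0$) forces $v = \rho e^{i\theta} a_m \in L_i$; the finiteness of $\rho$ follows from $|a_m| \ne 0$ and $t_j(P_j-x) \to v$. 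For the reverse inclusion, given $\lambda a_m \in L_i$ with $\lambda \ne 0$, choose $s_j \to 0$ with $(s_j/|s_j|)^m = \lambda/|\lambda|$ and set $t_j = |\lambda|/|s_j|^m$; then $P_j := \phi_i(s_j) \in H_i$ satisfies $t_j(P_j - x) \to \lambda a_m$.

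The main obstacle is producing the parametrization $\phi_i$; once this is in hand, the remaining estimates are elementary. The parametrization is a classical consequence of Weierstrass preparation: after a generic linear change of coordinates in $\C^n$ some projection of $H_i$ onto a coordinate line becomes a branched cover of degree equal to the multiplicity $m$ of $H_i$ at $x$, and extracting an $m$-th root of that coordinate yields the parameter $s$. Assembling the contributions of the $k$ irreducible components then gives $T_x H = L_1 \cup \cdots \cup L_k$, a union of at most $k$ complex lines through the origin, as claimed.
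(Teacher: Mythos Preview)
Your argument is correct. The decomposition into irreducible germs, the identification of the tangent cone of the union with the union of the tangent cones, and the explicit computation via a Puiseux-type parametrization are all sound; the only nontrivial input you invoke is the local uniformization of an irreducible one-dimensional analytic germ by a disk, which is classical.

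As for comparison: the paper does not actually prove this lemma. It simply states the result and refers the reader to Chirka's book (\cite{Chirka1989}, Corollary on page~80). So your write-up supplies strictly more than the paper does. Your route is in fact the standard one underlying the cited reference: normalize each local branch and read off the tangent direction from the leading term of the parametrization. The one place worth a word of care is the appeal to the parametrization theorem itself---your sketch via Weierstrass preparation and extraction of an $m$-th root is the right idea, but it is doing the real work here, and in a fully self-contained treatment that step would itself require a page or two.
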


\begin{defn}
 Let $B\subset\C^n$ be a polidisc. The torus $\T$ with same center and same poliradius as $B$ is called its \emph{\v Silov boundary.} We will denote it by $\partial_{S}B$.
\end{defn}

The \v Silov boundary is a very general notion, for Banach algebras, but we will not need it here in all generality. For details, we refer to \cite{DSSST}, from page 325.

\begin{lemma}\label{lem2}
 Let $B$ be a polydisk, $\partial_{S}B$ be its \v Silov boundary, and  $u:U\to \R$ be a harmonic function defined on a neighbourhood $U$ of $\ov{B}$.
    Then
    $$\max_{\overline B} u\,=\,\max_{\partial_{S}B}u.$$
\end{lemma}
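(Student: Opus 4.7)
The strategy is iterated reduction to the one-variable maximum principle, peeling off one complex coordinate at a time. Write the polydisk as $B=\prod_{i=1}^{n}D_i$ with $D_i=\{|z_i-c_i|<r_i\}$, so that $\partial_{S}B=\prod_{i=1}^{n}\partial D_i$. Pick a point $a=(a_1,\dots,a_n)\in\overline B$ realizing $u(a)=\max_{\overline B}u=:M$. The idea is to modify the coordinates of $a$ one at a time, each time replacing the $i$-th coordinate by a point on $\partial D_i$ while preserving the value $M$, so that after $n$ steps we land on $\partial_{S}B$.

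Concretely, first consider the slice $g_1(z_1):=u(z_1,a_2,\dots,a_n)$ as a function on $\overline{D_1}$; it is harmonic in $z_1\in D_1$ and continuous up to its boundary, because restricting a harmonic function on $U\subset\C^n$ to a complex line still produces a harmonic function in the remaining complex variable. Since $g_1(a_1)=M$ is the maximum of $g_1$, the one-variable maximum principle furnishes a point $b_1\in\partial D_1$ with $g_1(b_1)=M$. Iterating: set $g_2(z_2):=u(b_1,z_2,a_3,\dots,a_n)$, extract $b_2\in\partial D_2$ with $g_2(b_2)=M$, and so on. After $n$ steps one reaches $(b_1,\dots,b_n)\in\partial_{S}B$ with $u(b_1,\dots,b_n)=M$, giving $\max_{\partial_{S}B}u\ge M$; the reverse inequality $\max_{\partial_{S}B}u\le M$ is immediate from $\partial_{S}B\subset\overline B$.

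The only nontrivial point is the slice-harmonicity of $u$ in each individual complex variable, and it is here that the polydisk's product structure becomes essential. In the intended application this is automatic: the functions to which the lemma is applied are of the form $u_n=-\mathrm{Re}(z_n^2)/n$, and since $z_n$ depends holomorphically on $(z_0,w_0)$, so does $z_n^2$, making $u_n$ the real part of a holomorphic function, hence pluriharmonic; pluriharmonic functions restrict to ordinary harmonic functions along every complex line, which is exactly what the iterative step consumes. The 1D maximum principle then does all the work.
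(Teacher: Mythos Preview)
Your approach---iterating the one-variable maximum principle along complex coordinate slices---is essentially the same as the paper's. The paper organizes the iteration slightly differently (first reducing $\max_{\partial B}$ to $\max_{\partial_S B}$ via a disc in $\partial B$, then reducing $\max_{\overline B}$ to $\max_{\partial B}$ via a disc in $\overline B$), while you march through the coordinates one by one; these are cosmetic variants of the same idea.

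Where your write-up actually improves on the paper is in the last paragraph. You correctly isolate the one non-automatic step: that the restriction of $u$ to a complex line is harmonic in the remaining complex variable. The paper's proof simply asserts this (``Being $u$ harmonic \dots, $u$ is harmonic on such a closed disc''), but for a function that is merely harmonic on $\C^n\cong\R^{2n}$ this is false in general---for instance $u(z_1,z_2)=|z_1|^2-|z_2|^2$ is harmonic on $\C^2$, yet on the unit bidisk its maximum over $\overline B$ equals $1$ while its maximum over $\partial_S B$ equals $0$, so the lemma as stated fails. What both arguments really require is that $u$ be \emph{pluriharmonic} (or at least separately harmonic in each complex variable). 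You observe that the functions $u_n=-\Re(z_n^2)/n$ actually used in the paper are real parts of holomorphic functions, hence pluriharmonic, so the slice argument is legitimate in the application. That caveat is exactly right, and it patches a genuine gap that the paper's own proof leaves implicit.
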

Recall that $\partial$ denotes the topological boundary.
\begin{proof}
    For every $P\in \partial B\setminus {\partial_{S}B}$ there is a horizontal Euclidean  disc $D$ through $P$ which is contained in $\partial B$ whose boundary is in ${\partial_{S}B}$. Being $u$ harmonic in a neighbourhood of $\overline B$, $u$ is harmonic on such a closed disc, hence its value at $P$ is less or equal to the maximum of $u$ at its boundary $\partial D\subset\partial_{S} B$. Hence
    $$\max_{\partial B} u\,=\,\max_{\partial_{S} B}u\,.$$
    If $P\in B$, we can find a disc through $P$ with boundary in $\partial B$ and repeat the argument, getting the conclusion
    $$\max_{\overline B} u\,=\,\max_{\partial B}u\,=\,\max_{\partial_{S}B}u\,.$$
\end{proof}

\begin{lemma}\label{lem:torofuoriH}
    Let $H\subset\C^2$ be an analytic set of dimension one.  Then  for every $ P\in H$ there exists an arbitrarily small torus  $\T_P$ centered in $P$ such that $\T_P\cap H=\emptyset$.
\end{lemma}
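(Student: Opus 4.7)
The plan is to exploit Lemma~\ref{lem1}. After translating so that $P$ is the origin, the tangent cone $C_P H$ is a finite union $L_1\cup\cdots\cup L_k$ of complex lines through $0$. Each $L_i$ is either a coordinate axis $\{z_1=0\}$ or $\{z_2=0\}$, or has the form $\{z_2 = c_i z_1\}$ with $c_i\in\C^*$; in the latter case I set $\rho_i := |c_i|$. Observe that a torus $\T(r_1,r_2) := \{(z_1,z_2)\in\C^2 : |z_1|=r_1,\, |z_2|=r_2\}$ with $r_1,r_2>0$ never meets a coordinate axis, and meets a non-axis line $L_i$ precisely when $r_2/r_1 = \rho_i$.

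Since the list $\{\rho_i\}$ is finite, I pick $\rho>0$ different from every $\rho_i$ and consider the one-parameter family $\T_r := \T(r, \rho r)$ for $r>0$. By construction each $\T_r$ is disjoint from the whole tangent cone $C_P H$. I claim that $\T_r\cap H = \emptyset$ for all sufficiently small $r>0$, which proves the lemma since these tori are centered at $P$ and have polyradii tending to $0$.

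To establish the claim, suppose for contradiction that there exist $r_j\searrow 0$ and $Q_j \in H\cap\T_{r_j}$. Then $Q_j \to 0 = P$, and the rescaled points $\tfrac{1}{r_j} Q_j$ all lie on the fixed compact torus $\T(1,\rho)$. Passing to a subsequence, $\tfrac{1}{r_j} Q_j \to v$ with $|v_1|=1$ and $|v_2|=\rho$. By the definition of tangent vector, applied with $t_j = 1/r_j > 0$, the limit $v$ lies in $C_P H$, hence in some $L_i$. But $v$ has both coordinates nonzero and $|v_2|/|v_1| = \rho \neq \rho_i$ for every $i$, contradicting the description of $\T(r_1,r_2)\cap L_i$ given above.

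The only substantive point is the final compactness/rescaling step, which bridges the gap between ``$\T_r$ avoids the tangent cone'' (an infinitesimal statement) and ``$\T_r$ avoids $H$ for small $r$'' (a statement about $H$ itself); the rest is linear-algebra bookkeeping.
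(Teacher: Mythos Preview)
Your proof is correct and follows essentially the same route as the paper: both identify the tangent cone as finitely many complex lines (Lemma~\ref{lem1}), choose a radius ratio avoiding all of their slopes, and conclude that the corresponding torus misses $H$ near $P$. The only cosmetic difference is that the paper first rotates coordinates so that no tangent line is vertical and then argues directly that $H$ near $P$ lies in a small cone of slopes about the tangent directions, whereas you handle the coordinate axes separately and replace the cone inclusion by a clean contradiction via rescaling onto the fixed compact torus $\T(1,\rho)$; your packaging is arguably tidier, but the substance is the same.
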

\begin{proof}
    Let $P\in H$. Consider the tangent cone $C_P$ of $H$ at $P$. By Lemma \ref{lem1}, $C_P$ is a finite set of directions $\alpha_1,\ldots,\alpha_k\in\hat\C$. Up to a rotation, we can suppose all directions to be in $\C$. Up to choosing $\eta>0$ small enough, we can ensure  that the polidisk  $B_\eta$  of poliradius $\eta$ centered in $P$ intersects only one connected component of  $H$. Moreover, by the definition of tangent cone, we can choose a small neighbourhood $K\subset\C$ of all $\alpha_j$ such that 
    $$
    H\cap U_\eta\ \subset\ \cup_{\alpha\in K}(P+(z,\alpha z))\,.
    $$
    We can suppose $K$ to be small enough that there is $0<b<1$ such that $K\cap\{\beta\in \C\ | \ |\beta|=b\}=\emptyset$.

    For any $0<\delta<\eta$, defining
    $$
    \T_P=\left\{|z-z_P|=\delta\right\}\times\left\{|w-w_P|=b\delta\right\}
    $$
    we have that $\T_P\subset U_\delta$ and if $(z,w)\in \T_P$, $\frac{w-w_P}{z-z_P}=\beta$ with $|\beta|=b$. So
    $$
    \T_P\cap H\ =\ T_P\cap U_\eta \cap H\ =\ \emptyset.
    $$
\end{proof}

\begin{proof}[Proof of Proposition \ref{prop: harmonic bounded outside analytic set}]
Let $K$ and $L$ be compacts sets as in the statement. For each $P\in H\cap K$, by Lemma \ref{lem:torofuoriH} there exists a torus $\T_P\subset L$ centered in $P$ such that $\T_P\cap H=\emptyset$. Each torus $\T_P$ is the \v Silov boundary of a polidisk $B_P$ centered in $P$. Since $\{B_P\}_{P\in H\cap K}$ is a covering of $H\cap K$, by compactness we can extract a finite covering $\{B_1,\ldots,B_k\}$.

There is a $\eta$-neighbourhood $U_\eta$ of $H$ such that $U_\eta\cap K\subset \cup B_j$. If the harmonic function $u$ satisfies $u\leq \alpha$ on $L\setminus U_\eta$ then it satisfies the same estimate on all tori $\T_j$, and by Lemma \ref{lem2} the same estimate holds on all $B_j$. Hence $u\leq \alpha$ on $K$.
\end{proof}

\bibliographystyle{amsalpha}
\bibliography{EscapingFC}
\end{document}